\documentclass[11pt]{amsart}

\usepackage{amssymb}
\usepackage{graphicx}   
\usepackage{mathrsfs}
\usepackage{amsmath}
\usepackage{amsthm}
\usepackage{amsfonts}
\usepackage{latexsym}






\newtheorem{thm}{Theorem}[section]
\newtheorem{prop}[thm]{Proposition}
\newtheorem{lem}[thm]{Lemma}
\newtheorem{cor}[thm]{Corollary}

\theoremstyle{definition}

\theoremstyle{claim}

\theoremstyle{remark}
\newtheorem{remark}[thm]{Remark}


\numberwithin{equation}{section}




\DeclareMathOperator{\dist}{dist} 

\begin{document}


\title{Energy Concentration for Min-Max Solutions of the Ginzburg-Landau Equations on manifolds with $b_1(M)\neq 0$.}

\author{Daniel L. Stern}
\address{Department of Mathematics, Princeton University, 
Princeton, NJ 08544}
\email{dls6@math.princeton.edu}



\begin{abstract}
We establish a new estimate for the Ginzburg-Landau energies $E_{\epsilon}(u)=\int_M\frac{1}{2}|du|^2+\frac{1}{4\epsilon^2}(1-|u|^2)^2$ of complex-valued maps $u$ on a compact, oriented manifold $M$ with $b_1(M)\neq 0$, obtained by decomposing the harmonic component $h_u$ of the one-form $ju:=u^1du^2-u^2du^1$ into an integral and fractional part. We employ this estimate to show that, for critical points $u_{\epsilon}$ of $E_{\epsilon}$ arising from the two-parameter min-max construction considered by the author in previous work, a nontrivial portion of the energy must concentrate on a stationary, rectifiable $(n-2)$-varifold as $\epsilon\to 0$.
\end{abstract}

\maketitle







\section{Introduction}

\hspace{6mm} In \cite{Stern}, we observed that on any compact Riemannian manifold $(M^n,g)$, a simple two-parameter min-max procedure for the energies
\begin{equation}\label{edef}
E_{\epsilon}(u)=\int_Me_{\epsilon}(u):=\int_M\frac{|du|^2}{2}+\frac{(1-|u|^2)^2}{4\epsilon^2}
\end{equation}
on $W^{1,2}(M,\mathbb{C})$ can be used to produce nontrivial solutions $u_{\epsilon}\in C^{\infty}(M,\mathbb{C})$ of the Ginzburg-Landau equation
\begin{equation}\label{gle}
\Delta u_{\epsilon}=-\epsilon^{-2}(1-|u_{\epsilon}|^2)u_{\epsilon}.
\end{equation}

\hspace{6mm} Inspired by Guaraco's work on the Allen-Cahn min-max \cite{Gu} and the well known connection between Ginzburg-Landau functionals and the codimension two area functional (see, e.g., \cite{BBO}, \cite{BOSp}, \cite{LR} for some of the major results in this line), we began to investigate in \cite{Stern} the energy concentration of these min-max solutions in the limit $\epsilon\to 0$, with an eye to providing a p.d.e.-based alternative to Almgren's min-max construction (\cite{A}, \cite{P}) of stationary integral varifolds in codimension two.

\hspace{6mm} To this end, we considered in \cite{Stern} the energy growth of the min-max solutions $u_{\epsilon}$, and established bounds of the form
\begin{equation}\label{oldebounds}
C^{-1}|\log\epsilon|\leq E_{\epsilon}(u_{\epsilon})\leq C|\log\epsilon|
\end{equation}
for some $C(M)>0$. Then, by translating arguments of \cite{BBO} to the setting of compact manifolds, we observed that, when the first Betti number $b_1(M)=0$, for any family of solutions of (\ref{gle}) satisfying (\ref{oldebounds}), a subsequence of the normalized energy measures
\begin{equation}\label{mudef}
\mu_{\epsilon}:=\frac{e_{\epsilon}(u_{\epsilon})}{|\log\epsilon|}dv_g
\end{equation}
converges to (the weight measure of) a stationary, rectifiable $(n-2)$-varifold \cite{Stern}. Thus, when $b_1(M)=0$, we confirmed that min-max methods for the Ginzburg-Landau functional can be used to produce a nontrivial stationary rectifiable $(n-2)$-varifold--a result which Da Rong Cheng informed us he had obtained independently. In particular, for these topologies, our methods nearly recover Almgren's existence result in codimension two, up to the subtle problem of determining whether the density of the limiting varifold takes values in $\pi\cdot\mathbb{N}$.

\hspace{6mm} When $b_1(M)\neq 0$, however, we noted that one could produce sequences of solutions of (\ref{gle}) with energy growth like (\ref{oldebounds}) whose energy distributes evenly over $M$--that is, solutions whose energy blows up without concentrating \cite{Stern}. Intuitively, one expects to find stable solutions of (\ref{gle}) approximating the harmonic representative of each class in $[M:S^1]\cong H^1(M;\mathbb{Z})$ (see, e.g., \cite{Alme1}, \cite{JMZ} for results in this direction\footnote{As an aside, we remark that for compact, oriented $(M^n,g)$, the existence of local minimizers of $E_{\epsilon}$ lying near each harmonic $\phi\in C^{\infty}(M,S^1)$ follows from Proposition \ref{threshprop} of this paper.} on domains in $\mathbb{R}^n$), so that when $b_1(M)\neq 0$, energy blow-up of the form (\ref{oldebounds}) can in principle arise from solutions associated to classes in $[M:S^1]$ with degree growing like $|\log\epsilon|^{1/2}$.

\hspace{6mm} The key to understanding how energy blows up for a given family $u_{\epsilon}$ of solutions to (\ref{gle}) lies in the study of the one-forms
\begin{equation}\label{jdefintro}
ju_{\epsilon}:=u_{\epsilon}^*(r^2d\theta)=u_{\epsilon}^1du_{\epsilon}^2-u_{\epsilon}^2du_{\epsilon}^1
\end{equation}
and their Hodge decompositions 
\begin{equation}\label{juhodgeintro}
ju_{\epsilon}=d^*\xi_{\epsilon}+h_{\epsilon}.
\end{equation} 
(That $d^*ju_{\epsilon}=0$ is a simple consequence of (\ref{gle}); hence the triviality of the exact part of (\ref{juhodgeintro}).) For solutions lying near harmonic maps to $S^1$ of degree $\sim |\log\epsilon|^{1/2}$, one expects energy growth to be driven by the harmonic part $h_{\epsilon}$, in the sense that
\begin{equation}\label{hendom}
E_{\epsilon}(u_{\epsilon})-\frac{1}{2}\|h_{\epsilon}\|_{L^2}^2=o(|\log\epsilon|)\text{ as }\epsilon\to 0.
\end{equation}
If, by contrast, the term $\|h_{\epsilon}\|_{L^2}^2=o(|\log\epsilon|)$ as $\epsilon\to 0$, we can go through the arguments of \cite{BBO} to show that the energy concentrates on a stationary, rectifiable $(n-2)$-varifold. One of the striking observations of \cite{BOSp} is that, for solutions of the parabolic Ginzburg-Landau equations in $\mathbb{R}^n$, the $|h_{\epsilon}|^2$ term\footnote{Rather, its analog in the setting of \cite{BOSp}.} doesn't interact in an essential way with the rest of the energy, so that, roughly speaking, one can remove it to obtain a family of modified energy measures exhibiting the desired\footnote{I.e., in the parabolic setting of \cite{BOSp}, concentration to a codimension-two Brakke flow.} concentration behavior \cite{BOSp}. In Section 3 of this paper, we translate the stationary case of this result to our setting, proving:

\begin{thm}\label{vortconc1} Let $u_{\epsilon}$ be a family of solutions of (\ref{gle}) on a compact, oriented $(M^n,g)$, with $E_{\epsilon}(u_{\epsilon})=O(|\log\epsilon|)$ as $\epsilon \to 0$. We can then find harmonic maps $\phi_{\epsilon}\in C^{\infty}(M,S^1)$ such that, setting
$$\tilde{u}_{\epsilon}:=\phi_{\epsilon}^{-1}\cdot u_{\epsilon}$$
and
$$\nu_{\epsilon}:=\frac{e_{\epsilon}(\tilde{u}_{\epsilon})}{|\log\epsilon|}dv_g\in C^0(M)^*,$$
there exists a subsequence $\epsilon_j\to 0$ and a stationary, rectifiable $(n-2)$-varifold $V$ such that
\begin{equation}
\nu_{\epsilon_j}\to \|V\|\text{ weakly in }C^0(M)^*\text{ as }\epsilon_j\to 0
\end{equation}
and
\begin{equation}
\|V\|(M)=\lim_{\epsilon_j\to 0}\frac{E_{\epsilon_j}(u_{\epsilon_j})-\frac{1}{2}\|h_{\epsilon_j}\|_{L^2}^2}{|\log\epsilon_j|}.
\end{equation}
(Here, $h_{\epsilon}$ is the harmonic part of $ju_{\epsilon}$, as in (\ref{juhodgeintro}).)
\end{thm}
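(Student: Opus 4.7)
\emph{Proof plan.} The plan is to use a gauge transformation by a harmonic $S^1$-valued map to reduce to the case in which the harmonic part of the supercurrent is of bounded $L^2$-size, and then to invoke the concentration machinery of \cite{BBO} in the closed-manifold formulation developed in \cite{Stern}. Starting from the Hodge decomposition $ju_\epsilon=d^*\xi_\epsilon+h_\epsilon$, I note that the set $\Lambda$ of harmonic $1$-forms with periods in $2\pi\mathbb{Z}$ is a full-rank lattice in the finite-dimensional space $\mathcal{H}^1(M)$; hence one can choose $\eta_\epsilon\in\Lambda$ with $\|h_\epsilon-\eta_\epsilon\|_{L^2}\leq D(M,g)$, and take $\phi_\epsilon\in C^\infty(M,S^1)$ to be any harmonic map with $j\phi_\epsilon=\eta_\epsilon$.

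The second step is bookkeeping. The pointwise identities
\[
|d\tilde u_\epsilon|^2=|du_\epsilon|^2-2\langle ju_\epsilon,\eta_\epsilon\rangle+|u_\epsilon|^2|\eta_\epsilon|^2,\qquad j\tilde u_\epsilon=ju_\epsilon-|u_\epsilon|^2\eta_\epsilon,\qquad |\tilde u_\epsilon|=|u_\epsilon|
\]
combined with $\int\langle d^*\xi_\epsilon,\eta_\epsilon\rangle=0$, the standard potential bound $\|1-|u_\epsilon|^2\|_{L^2}=O(\epsilon|\log\epsilon|^{1/2})$, and the equivalence of norms on $\mathcal{H}^1(M)$ (giving $\|\eta_\epsilon\|_{L^\infty}\leq C\|\eta_\epsilon\|_{L^2}=O(|\log\epsilon|^{1/2})$), yield
\[
E_\epsilon(\tilde u_\epsilon)=E_\epsilon(u_\epsilon)-\tfrac12\|h_\epsilon\|_{L^2}^2+\tfrac12\|h_\epsilon-\eta_\epsilon\|_{L^2}^2+o(1)=E_\epsilon(u_\epsilon)-\tfrac12\|h_\epsilon\|_{L^2}^2+O(1).
\]
The same computation, applied to the decomposition $j\tilde u_\epsilon=d^*\xi_\epsilon+(h_\epsilon-\eta_\epsilon)+(1-|u_\epsilon|^2)\eta_\epsilon$, shows that the harmonic part $\tilde h_\epsilon$ of $j\tilde u_\epsilon$ satisfies $\|\tilde h_\epsilon\|_{L^2}=O(1)$, in particular $o(|\log\epsilon|^{1/2})$. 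Dividing the energy identity by $|\log\epsilon|$ supplies the mass formula of the theorem once weak convergence of $\nu_\epsilon$ is established.

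For the final step, since $\phi_\epsilon$ is harmonic one checks that $\tilde u_\epsilon$ satisfies the gauged Ginzburg--Landau equation
\[
\Delta\tilde u_\epsilon+2i\langle\eta_\epsilon,d\tilde u_\epsilon\rangle-|\eta_\epsilon|^2\tilde u_\epsilon=-\epsilon^{-2}(1-|\tilde u_\epsilon|^2)\tilde u_\epsilon,
\]
whose associated stress--energy tensor is still divergence-free, and because $|\tilde u_\epsilon|=|u_\epsilon|$ the vortex structure is unchanged. With the harmonic part of $j\tilde u_\epsilon$ now of bounded size, the monotonicity, $\eta$-ellipticity/clearing-out, and rectifiability arguments of \cite{BBO}, translated to closed manifolds as in \cite{Stern}, can be rerun to produce a stationary rectifiable $(n-2)$-varifold $V$ with $\nu_{\epsilon_j}\rightharpoonup\|V\|$ along a subsequence. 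The main obstacle is here: one has to verify that the two perturbation terms $2i\langle\eta_\epsilon,d\tilde u_\epsilon\rangle$ and $|\eta_\epsilon|^2\tilde u_\epsilon$, whose coefficients only decay like $O(|\log\epsilon|^{1/2})$ in $L^\infty$, contribute only $o(|\log\epsilon|)$ errors in the clearing-out threshold, in the monotonicity formula, and in passing to the limit in the stationarity identity; the saving gain comes from the small factors $\tilde h_\epsilon$ and $(1-|u_\epsilon|^2)\eta_\epsilon$ that appear whenever these perturbations are paired against the stress--energy divergence or against test vector fields.
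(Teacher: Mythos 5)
Your gauge choice and the bookkeeping leading to the mass formula are correct and match the paper: the pointwise identity $e_\epsilon(\tilde u_\epsilon)=e_\epsilon(u_\epsilon)+\tfrac12|u_\epsilon|^2|\eta_\epsilon|^2-\langle ju_\epsilon,\eta_\epsilon\rangle$, the orthogonality $\int\langle d^*\xi_\epsilon,\eta_\epsilon\rangle=0$, and the potential bound yield precisely $E_\epsilon(\tilde u_\epsilon)=E_\epsilon(u_\epsilon)-\tfrac12\|h_\epsilon\|_{L^2}^2+O(1)$, and the decomposition $h_\epsilon=\eta_\epsilon+h_\epsilon'$ with $h_\epsilon'$ of bounded size is the same reduction the paper makes.

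The difficulty is in the stationarity step, where your proposal has a genuine gap. The sentence ``whose associated stress--energy tensor is still divergence-free'' conflates two different tensors. The tensor that defines the generalized varifold in the theorem is the \emph{ungauged} one, $T_\epsilon(\tilde u_\epsilon)=e_\epsilon(\tilde u_\epsilon)Id-d\tilde u_\epsilon^*d\tilde u_\epsilon$, whose weight measure is $\nu_\epsilon$; this tensor is \emph{not} divergence-free, since $\tilde u_\epsilon$ does not solve the ungauged Ginzburg--Landau equation. The gauge-covariant tensor for the connection $-\eta_\epsilon$ \emph{is} divergence-free, but by gauge invariance it equals $T_\epsilon(u_\epsilon)$ and its weight is $e_\epsilon(u_\epsilon)\,dv_g$ --- exactly the measure that fails to concentrate when $b_1(M)\neq 0$, so it cannot give the theorem. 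One must therefore directly control $\int\langle T_\epsilon(\tilde u_\epsilon),\nabla X\rangle$. After subtracting off $T_\epsilon(u_\epsilon)$ and the identically divergence-free harmonic-form tensors built from $\eta_\epsilon$ and $h_\epsilon'$, the residual has a term of size $\|\eta_\epsilon\|_\infty\|d^*\xi_\epsilon\|_{L^1}\|\nabla X\|_\infty$. Using only the trivial estimate $\|d^*\xi_\epsilon\|_{L^1}\leq C\|ju_\epsilon\|_{L^2}\leq C|\log\epsilon|^{1/2}$ together with $\|\eta_\epsilon\|_\infty\leq C|\log\epsilon|^{1/2}$ gives an $O(|\log\epsilon|)$ error, which does \emph{not} vanish after dividing by $|\log\epsilon|$, so the limit first variation would not be seen to be zero. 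The missing ingredient is the Jerrard--Soner $W^{-1,p_n}$ Jacobian estimate, which combined with $L^p$ elliptic regularity for the Hodge Laplacian upgrades this to $\|d^*\xi_\epsilon\|_{L^1}\leq C\bigl(E_\epsilon(u_\epsilon)/|\log\epsilon|+\epsilon^\gamma\bigr)\leq C$, independent of $\epsilon$; that gain of a full $|\log\epsilon|^{1/2}$ factor is exactly what makes the first-variation error $O(|\log\epsilon|^{1/2})$ and hence $o(|\log\epsilon|)$. Your ``saving small factors'' $\tilde h_\epsilon$ and $(1-|u_\epsilon|^2)\eta_\epsilon$ do not by themselves close this: $\tilde h_\epsilon$ is bounded but does not touch the co-exact term, and without Jerrard--Soner the co-exact term overwhelms the normalization.

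A secondary divergence from the paper: you propose to re-run monotonicity and $\eta$-ellipticity for the \emph{gauged} equation satisfied by $\tilde u_\epsilon$, with the perturbations $2i\langle\eta_\epsilon,d\tilde u_\epsilon\rangle$ and $|\eta_\epsilon|^2\tilde u_\epsilon$ treated as lower order. This might be workable but you do not carry it out, and it risks accumulating $O(|\log\epsilon|^{1/2})$ losses in each step. The paper circumvents this entirely: it applies monotonicity and $\eta$-ellipticity only to the genuine solution $u_\epsilon$, then compares $\int_{B_r}e_\epsilon(\tilde u_\epsilon)$ to $\int_{B_r}e_\epsilon(u_\epsilon)$ via the pointwise energy identity and the bound $\|\eta_\epsilon\|_\infty\int_{B_r}|ju_\epsilon|\leq Cr^{n-1}|\log\epsilon|$, which is $o(r^{n-2}|\log\epsilon|)$ for $r$ small. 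That gives the density lower bound for $\nu$ without ever touching the gauged equation. In short, the reduction step is right, but the stationarity argument as written would not close without Jerrard--Soner, and the claim that the relevant stress--energy tensor is divergence-free should be retracted.
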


\begin{remark} It will follow from the proof of Theorem \ref{vortconc1} that one can also characterize $\|V\|$ as the limit of the measures $\frac{1}{2}\frac{|d^*\xi_{\epsilon_j}|^2}{|\log\epsilon_j|}dv_g$ associated to the co-exact part $d^*\xi_{\epsilon}$ of $ju_{\epsilon}$, so that we can define $V$ without reference to the auxiliary maps $\tilde{u}_{\epsilon}$.
\end{remark}

\begin{remark} For simplicity, we have chosen to state all of our results in the setting of oriented manifolds, so that we can employ Hodge decompositions liberally without comment. But each result of course yields information in the unoriented case as well, by lifting the solutions $u_{\epsilon}$ to the double cover.
\end{remark}

\hspace{6mm} For the family of solutions $u_{\epsilon}$ arising from the min-max construction of \cite{Stern}, we then establish an estimate of the form
\begin{equation}\label{vortestint}
\liminf_{\epsilon\to 0}\frac{E_{\epsilon}(u_{\epsilon})-\frac{1}{2}\|h_{\epsilon}\|_{L^2}^2}{|\log\epsilon|}\geq c(M)>0,
\end{equation}
so that, by Theorem \ref{vortconc1}, we obtain 

\begin{thm}\label{minmaxvort} For the solutions $u_{\epsilon}$ of (\ref{gle}) produced by the two-parameter min-max construction of \cite{Stern} on a compact, oriented $(M^n,g)$ of dimension $n\geq 2$, we can find a subsequence $\epsilon_j\to 0$ and a nontrivial stationary, rectifiable $(n-2)$ varifold $V$ such that
\begin{equation}
\frac{\frac{1}{2}|d^*\xi_{\epsilon_j}|^2}{|\log\epsilon_j|}dv_g\to \|V\|\text{ weakly in }C^0(M)^*\text{ as }\epsilon_j\to 0.
\end{equation}
\end{thm}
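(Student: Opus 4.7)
The strategy is to feed the min-max family of \cite{Stern} into Theorem~\ref{vortconc1} and then prove the energy-gap estimate (\ref{vortestint}) to guarantee that the resulting limit varifold is nontrivial.

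\emph{Applying Theorem \ref{vortconc1}.} Because the min-max solutions $u_\epsilon$ of \cite{Stern} already satisfy $E_\epsilon(u_\epsilon)=O(|\log\epsilon|)$, Theorem~\ref{vortconc1} yields, along a subsequence $\epsilon_j\to 0$, harmonic maps $\phi_{\epsilon_j}\in C^\infty(M,S^1)$ and a stationary rectifiable $(n-2)$-varifold $V$ with
$$\|V\|(M)=\lim_{j\to\infty}\frac{E_{\epsilon_j}(u_{\epsilon_j})-\tfrac{1}{2}\|h_{\epsilon_j}\|_{L^2}^2}{|\log\epsilon_j|}.$$
The remark following Theorem~\ref{vortconc1} further identifies $\|V\|$ with the weak limit of $\tfrac{1}{2}|d^*\xi_{\epsilon_j}|^2/|\log\epsilon_j|\,dv_g$, so Theorem~\ref{minmaxvort} reduces to showing that the displayed limit is strictly positive; i.e.\ to establishing (\ref{vortestint}).

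\emph{Proving the energy gap.} I would argue by contradiction. Suppose along some subsequence $E_\epsilon(u_\epsilon)-\tfrac{1}{2}\|h_\epsilon\|_{L^2}^2=o(|\log\epsilon|)$. Using the decomposition of $h_\epsilon$ into integer and fractional parts promised in the abstract, one can choose the harmonic phase $\phi_\epsilon$ of Theorem~\ref{vortconc1} so that $\phi_\epsilon^{-1}d\phi_\epsilon$ matches the integer part of $h_\epsilon$, while the harmonic part $h_{\tilde u_\epsilon}$ of $j\tilde u_\epsilon$ stays bounded in $L^2$. A direct expansion of $|du|^2$ for $u=\phi\tilde u$, together with $L^2$-orthogonality of the Hodge pieces and $|\tilde u_\epsilon|\to 1$ in the relevant region, gives (schematically) $E_\epsilon(u_\epsilon)-\tfrac{1}{2}\|h_{\epsilon}\|_{L^2}^2 = E_\epsilon(\tilde u_\epsilon)-\tfrac{1}{2}\|h_{\tilde u_\epsilon}\|_{L^2}^2+o(1)$; hence $E_\epsilon(\tilde u_\epsilon)=o(|\log\epsilon|)$. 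By Proposition~\ref{threshprop}, this places $\tilde u_\epsilon$ in the basin of a constant map, so $u_\epsilon$ lies near the harmonic $S^1$-map $\phi_\epsilon$, which is itself a local minimum of $E_\epsilon$. But the two-parameter min-max scheme of \cite{Stern} is designed so that the critical value $c_\epsilon$ exceeds the infimum of $E_\epsilon$ on any such neighborhood by at least $c(M)|\log\epsilon|$---reflecting the $|\log\epsilon|$ energy cost of a single vortex along the sweep-out. This yields the desired contradiction.

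\emph{Main obstacle.} The crux is the quantitative reduction from $E_\epsilon(u_\epsilon)-\tfrac{1}{2}\|h_\epsilon\|_{L^2}^2=o(|\log\epsilon|)$ to $E_\epsilon(\tilde u_\epsilon)=o(|\log\epsilon|)$ via a well-chosen $\phi_\epsilon$: this is precisely where the new integer/fractional splitting estimate advertised in the abstract must do its work, so that the (possibly unbounded) integer part of $h_\epsilon$ is absorbed into the harmonic phase while the bounded fractional part contributes only $O(1)$ to the energy. The remaining ingredient---a strict mountain-pass separation of the min-max value from every local minimum by a $|\log\epsilon|$-sized margin---should follow from a direct lower bound on the maximum along the sweep-out family of \cite{Stern}, combined with Proposition~\ref{threshprop}, but needs to be made uniform in $\epsilon$ and over the (finite-dimensional but $\epsilon$-dependent) set of harmonic phases $\phi_\epsilon$.
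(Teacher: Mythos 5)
Your reduction to the energy-gap estimate (\ref{vortestint}) via Theorem \ref{vortconc1} is exactly right, and you have correctly sensed that Proposition \ref{threshprop} must be the engine. But the contradiction argument as sketched has a genuine gap at the key step, and the step you identify as the ``crux'' is actually the easy part. The reduction from $E_{\epsilon}(u_{\epsilon})-\tfrac12\|h_{\epsilon}\|_{L^2}^2$ to $E_{\epsilon}(\tilde{u}_{\epsilon})$ up to $O(|\log\epsilon|^{1/2})$ is already established inside the proof of Theorem \ref{vortconc} (equation (\ref{etildmassest})), so it requires nothing new. What does require new work is precisely the ``mountain-pass separation'' you defer to the final paragraph, and your heuristic for it --- that small $E_{\epsilon}(\tilde{u}_{\epsilon})$ ``places $\tilde{u}_{\epsilon}$ in the basin of a constant map'' --- does not follow from Proposition \ref{threshprop}. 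That proposition is an energy lower bound in terms of $\dist_b(h_u,\Lambda)$; it would tell you that $h_{\tilde u_\epsilon}$ is close to the lattice $\Lambda$, but closeness of the harmonic part to $\Lambda$ is not a basin-of-attraction statement, and the min-max critical point is emphatically not a local minimum.

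The mechanism the paper actually uses is different and more robust: one passes from the two-parameter family $F^j$ to a one-parameter path $t\mapsto w_t$ from the constant map $w_0\equiv 1$ to the near-maximizer $w_1=v^j_\epsilon$, with $E_\epsilon(w_t)\leq E_\epsilon(w_1)$ along the path. Since $u\mapsto h_u$ is continuous on $W^{1,2}$ (Remark \ref{hrk}), if $|h_{w_1}|_b>\pi$ then by the intermediate value theorem the path crosses the wall $\dist_b(h_{w_{t_0}},\Lambda)=\pi$ at some $t_0$, and Proposition \ref{threshprop} is applied at that intermediate map $w_{t_0}$ --- not to $\tilde u_\epsilon$ --- to produce the $C^{-1}\pi|\log\epsilon|$ cost, together with $\|h_{w_{t_0}}-h_{w_1}\|_{L^2}\leq C$ to carry the $\tfrac12\|h_{w_1}\|_{L^2}^2$ term along. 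This is Lemma \ref{cebdslem}. The argument also has a second branch your sketch omits: when $|h_{v^j_\epsilon}|_b\leq\pi$ no wall is crossed, but then $\|h_{u_\epsilon}\|_{L^2}$ is bounded, and the gap comes directly from the lower bound $E_\epsilon(u_\epsilon)\geq C^{-1}|\log\epsilon|$ established in \cite{Stern}. In short: the framing and target are correct, but the heart of the proof --- an intermediate-value/wall-crossing argument applied to the min-max paths, followed by Proposition \ref{threshprop} at the crossing, plus the bounded-$h$ case --- is not supplied, and the basin-of-attraction heuristic cannot replace it.
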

In particular, we remove the topological condition $b_1(M)=0$ of \cite{Stern}, to show that energy concentration in the min-max solutions of the Ginzburg-Landau equations produces a nontrivial stationary, rectifiable $(n-2)$-varifold on every compact Riemannian manifold.

\hspace{6mm} The main ingredient in the proof of (\ref{vortestint}) is a new lower bound for the Ginzburg-Landau energy of arbitrary maps $u\in W^{1,2}(M,D^2)$ in terms of the harmonic component $h_u$ of $ju=u^*(r^2d\theta)$. Specifically, letting $\Lambda$ denote the lattice of integral harmonic one-forms (i.e., those harmonic one-forms of the form $j\phi$ for harmonic maps $\phi: M\to S^1$), we show in Proposition \ref{threshprop} that (in the relevant energy regime)
\begin{equation}\label{bignewest}
E_{\epsilon}(u)\geq \frac{1}{2}(1-\epsilon^{\alpha})\|h_u\|_{L^2}^2+c|\log\epsilon|\cdot\dist(h_u,\Lambda)-\epsilon^{\alpha}
\end{equation}
for some $\alpha(n)\in (0,1)$ and $c(M)>0$. This gives us a lower bound on the energy walls separating the components of $W^{1,2}(M,S^1)$ inside $W^{1,2}(M,\mathbb{C})$ (the higher-dimensional analog of the ``threshold transition energies" studied by Almeida in dimension two \cite{Alme1}, \cite{Alme2}), which we use to show that a map $v$ for which $E_{\epsilon}(v)-\frac{1}{2}\|h_v\|^2$ is small relative to $|\log\epsilon|$ cannot maximize energy in any of the two-parameter families used in the min-max construction. 

\begin{remark} It is well known that, when $b_1(M)\neq 0$, the presence of local minimizers for $E_{\epsilon}$ associated to classes in $[M:S^1]$ also gives rise to a number of other critical points via one-parameter mountain pass constructions (see, e.g., \cite{Alme1},\cite{Alme2},\cite{AlmeBeth} for more on this in the two-dimensional setting). Though we don't delve into this here, our results will also give information about energy concentration for these solutions--which, intuition suggests, may correspond to the min-max $(n-2)$-varifolds associated to classes in 
$$\pi_1(\mathcal{Z}_{n-2}(M;\mathbb{Z}),\{0\})\cong H_{n-1}(M;\mathbb{Z})\cong [M:S^1]$$
via Almgren's constructions \cite{A}, \cite{P}.
\end{remark}

\section*{Acknowledgements} I would like to thank my advisor Fernando Cod\'{a} Marques for his constant encouragement and interest in this work. The author is partially supported by NSF grants DMS-1502424 and DMS-1509027.

\section{Lower Bounds for $E_{\epsilon}(u)$ From the Harmonic Form $h_u$}

\hspace{6mm} Let $(M^n,g)$ be a compact, oriented Riemannian manifold of dimension $n\geq 2$, and consider a collection $\gamma_1,\ldots,\gamma_k: S^1\to M$ of smooth, simple closed curves generating the torsion-free part of $H_1(M;\mathbb{Z})$. On the space $\mathcal{H}^1(M)$ of harmonic one-forms, it will be convenient to introduce the box-type norm $|\cdot|_b$ given by
\begin{equation}\label{normbdef}
|h|_b:=\max_{1\leq i\leq k}|\int_{\gamma_i}h|,
\end{equation}
with associated metric $\dist_b$. We will denote by $\Lambda\subset \mathcal{H}^1(M)$ the lattice of integral harmonic one-forms; i.e.,
\begin{eqnarray*}
\Lambda &:=&\{h\in \mathcal{H}^1(M)\mid \int_{\gamma_i}h\in 2\pi \mathbb{Z}\text{ for }1\leq i \leq k\}\\
&=&\{\phi^*(d\theta)\mid \phi \in C^{\infty}(M,S^1)\text{ a harmonic map to }S^1\}.
\end{eqnarray*}

\hspace{6mm} For any $u\in W^{1,2}(M,\mathbb{C})$, we denote by $ju$ the one-form
\begin{equation}\label{jdefgen}
ju:=u^1du^2-u^2du^1=u^*(r^2d\theta),
\end{equation}
and let $h_u$ be the harmonic part of $ju$ in the Hodge decomposition
\begin{equation}\label{hodgedecomp}
ju=d\psi+d^*\xi+h_u.
\end{equation}
With notation in place, we can now state the central estimate of this section:

\begin{prop}\label{threshprop} There exist positive constants $\epsilon_0(M)\in (0,1)$, $C(M)<\infty$, and $\alpha(n)\in (0,1)$ such that if $u\in W^{1,2}(M,\mathbb{C})$ satisfies $|u|\leq 1$ and
\begin{equation}\label{energbdhyp}
E_{\epsilon}(u)\leq \epsilon^{-1/2}
\end{equation}
for some $\epsilon\in (0,\epsilon_0)$, then 
\begin{equation}\label{mainlemmaest}
E_{\epsilon}(u)\geq \frac{1}{2}(1-\epsilon^{\alpha})\|h_u\|_{L^2}^2+C^{-1}|\log\epsilon|\cdot \dist_b(h_u,\Lambda)-\epsilon^\alpha.
\end{equation}
\end{prop}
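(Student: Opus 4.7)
The inequality decomposes naturally into two pieces: an $L^2$-control on the harmonic part, and a genuinely nontrivial ``threshold'' cost for lying far from the integral lattice. The first piece is relatively soft. Starting from the pointwise identity $|du|^2 = |d|u||^2 + |u|^{-2}|ju|^2$ on $\{|u|>0\}$ (valid for $W^{1,2}$ maps after a standard truncation/approximation), the assumption $|u|\leq 1$ gives $|du|^2\geq |ju|^2$ almost everywhere, so integrating and invoking the Hodge orthogonality
\[
\|ju\|_{L^2}^2 = \|d\psi\|_{L^2}^2 + \|d^*\xi\|_{L^2}^2 + \|h_u\|_{L^2}^2
\]
yields the clean bound $E_\epsilon(u)\geq \tfrac12\|h_u\|_{L^2}^2$. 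This is already stronger than the $(1-\epsilon^\alpha)$-weighted form asked for; the slack factor $\epsilon^\alpha$ is introduced only to absorb error terms arising from the threshold part, and is negligible because the hypothesis $E_\epsilon(u)\leq \epsilon^{-1/2}$ forces $\tfrac12\epsilon^\alpha\|h_u\|^2\leq \epsilon^{\alpha-1/2}$, which tends to $0$ for any $\alpha\in(\tfrac12,1)$.

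For the threshold term I would proceed by slicing. Set $\delta:=\dist_b(h_u,\Lambda)$ and pick a generator $\gamma_i$ attaining the box-norm maximum, so that $\dist(\int_{\gamma_i}h_u,2\pi\mathbb{Z})=\delta$. Fix a tubular neighborhood $T_i\cong \gamma_i\times D^{n-1}_r$ of radius $r=r(M,g)>0$ independent of $\epsilon$, and for each $y\in D^{n-1}_r$ let $\gamma_i^y := \gamma_i\times\{y\}$. Since each $\gamma_i^y$ is homologous to $\gamma_i$ and $\int_{\gamma_i^y}d\psi=0$, closedness of $h_u$ gives
\[
\int_{\gamma_i^y}ju = \int_{\gamma_i}h_u + \beta(y), \qquad \beta(y):=\int_{\gamma_i^y}d^*\xi.
\]
Fubini on $T_i$ controls the averaged slice energy $\mathrm{avg}_y\,E_\epsilon^{1D}(u|_{\gamma_i^y})\leq C(M)\,E_\epsilon(u)$ and the $L^2$-norm $\mathrm{avg}_y\,\beta(y)^2\leq C(M)\,\|d^*\xi\|_{L^2}^2$, with $\|d^*\xi\|^2 \leq 2(E_\epsilon(u)-\tfrac12\|h_u\|^2)$ from the Hodge orthogonality used in the first part.

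The substantive ingredient is a one-dimensional threshold estimate: for every $v\in W^{1,2}(S^1,\overline{D^2})$,
\[
E_\epsilon^{1D}(v)\geq c\,|\log\epsilon|\cdot\dist\!\bigl(\tfrac{1}{2\pi}\textstyle\int_{S^1}jv,\mathbb{Z}\bigr) - C\epsilon^\alpha,
\]
which I would prove by partitioning $S^1$ into ``good'' arcs $\{|v|\geq \tfrac12\}$, on which $v=\rho e^{i\phi}$ has a locally defined phase and $\int\rho^2\phi'$ differs from the winding $\int\phi'$ by an error controllable by the potential term and Cauchy--Schwarz, and ``bad'' arcs $\{|v|<\tfrac12\}$ whose total length is $O(\epsilon^2 E_\epsilon^{1D})$. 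Each bad arc can accommodate at most an $O(1)$ phase defect, and producing a total defect of size $\delta$ away from $2\pi\mathbb{Z}$ either forces many bad arcs (each costing $\gtrsim 1$ by Modica--Mortola) or a sharply concentrated profile where a 1D Jerrard--Sandier-type lower bound supplies the $|\log\epsilon|$-factor. Applied to the slices $\gamma_i^y$ and integrated in $y$, a dichotomy based on the size of $\int_y\beta(y)^2\,dy$ closes the argument: either (a) $\int\beta^2$ is small, so $\dist(\theta+\beta(y),2\pi\mathbb{Z})$ stays near $\delta$ on a set of $y$ of definite measure and the 1D estimate gives $E_\epsilon(u)\gtrsim c(M)|\log\epsilon|\delta$ directly after integration, or (b) $\int\beta^2$ is large, in which case $\|d^*\xi\|^2$ is already large and the threshold cost is recovered from vortex concentration in $T_i$ via the identity $\beta(y)-\beta(y')=2\int_C Ju$ (Stokes), which links the variation of $\beta$ to the Jacobian $Ju$ and hence, by a transverse Jerrard--Soner bound on 2-dimensional slabs inside $T_i$, to a further $|\log\epsilon|$-energy cost.

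\textbf{Main obstacle.} The hardest step is the one-dimensional threshold estimate, particularly in the regime where the slice energy $E_\epsilon^{1D}$ is allowed to grow as large as $\epsilon^{-1/2}$; the bookkeeping of potentially many small bad arcs without losing the logarithmic factor is delicate, and it is essential that the Modica--Mortola per-arc cost and the Jerrard--Sandier concentration cost combine to give the single unified bound stated. A secondary subtlety is ensuring in case (b) of the dichotomy above that the vortex-based lower bound matches the slicing constant $c(M)$ uniformly in the lattice data — that is, that the constants do not degenerate when the lattice $\Lambda$ is close to a sublattice in the box norm.
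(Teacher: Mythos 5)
Your ``soft'' observation that $|du|^2 \geq |ju|^2$ when $|u|\leq 1$, and hence $E_\epsilon(u) \geq \tfrac12\|h_u\|_{L^2}^2$ by Hodge orthogonality, is correct. You also correctly identify slicing along tubular neighborhoods and Fubini selection of good slices as the way to link $\dist_b(h_u,\Lambda)$ to the co-exact part $d^*\xi$; this matches the paper's first half. But there are two genuine gaps, and they are not cosmetic.

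\textbf{First gap: the $|\log\epsilon|$ factor cannot come from a one-dimensional threshold estimate.} The quantity $|\log\epsilon|$ in Ginzburg--Landau lower bounds is a codimension-two (vortex) phenomenon. In one dimension, forcing $\dist(\tfrac{1}{2\pi}\int_{S^1}jv,\mathbb{Z})$ to be of order one compels $|v|$ to dip low on a set of definite phase contribution, and the resulting cost is of order $\epsilon^{-1}$ (optimize $1/\ell + \ell/\epsilon^2$ over the dip width $\ell$), not $|\log\epsilon|$. Your stated 1D inequality is therefore presumably true but vacuous in the useful regime: either the slice's deviation from $2\pi\mathbb{Z}$ is tiny (order $\epsilon E_\epsilon^{1D}$, as in the paper's degree computation on good slices), or the slice energy is $\gtrsim \epsilon^{-1}$, which contradicts the hypothesis. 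There is no 1D mechanism producing a bound proportional to $|\log\epsilon|\cdot\dist(\cdot,\mathbb{Z})$ with a uniform constant. The paper obtains the logarithm instead from the Jerrard--Soner estimate $\|djv\|_{W^{-1,p_n}}\leq C(E_\epsilon(v)/|\log\epsilon|+\epsilon^\gamma)$, combined with a $\|d^*\xi\|_{L^1}$-based lower bound on $\|dju\|_{W^{-1,p_n}}$ coming from the slicing argument and $L^p$ Hodge regularity. That is the step your plan is missing; the parenthetical appeal to ``a transverse Jerrard--Soner bound on 2-dimensional slabs'' in case (b) gestures at the right phenomenon but is not developed, and as written the $|\log\epsilon|$ is supposed to come from the 1D estimate, which it cannot.

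\textbf{Second gap: the two pieces of the inequality do not simply add.} You prove (or plan to prove) $E_\epsilon(u)\geq\tfrac12\|h_u\|^2$ and, separately, $E_\epsilon(u)\gtrsim|\log\epsilon|\dist_b(h_u,\Lambda)$. Adding these gives $2E_\epsilon(u)$ on the left, not $E_\epsilon(u)$. Your dichotomy does not repair this: in case (a) you conclude only $E_\epsilon(u)\gtrsim|\log\epsilon|\delta$, and in case (b) only that $\|d^*\xi\|^2$ is large; neither delivers the sum with the correct constants. The paper's device for producing the sum is to subtract off the harmonic $S^1$-valued map $\phi$ with $j\phi$ the integer part of $h_u$: applying Jerrard--Soner to $\tilde u:=\phi^{-1}u$ rather than to $u$ gives $E_\epsilon(\tilde u)\gtrsim|\log\epsilon|\dist_b(h_u,\Lambda)$ (after checking that $\|dj\tilde u-dju\|_{W^{-1,2}}$ is $O(\epsilon^{1/2}(1+\|h_u\|^2))$), while the energy identity $E_\epsilon(\tilde u)=E_\epsilon(u)+\int\tfrac12|u|^2|j\phi|^2-\langle ju,j\phi\rangle$ lets one convert this back to $E_\epsilon(u)\geq\tfrac12(1-\epsilon^\alpha)\|h_u\|^2+C^{-1}|\log\epsilon|\dist_b(h_u,\Lambda)-\epsilon^\alpha$. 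Without this subtraction, there is no way to recover the stated inequality from your two separate estimates.

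In short: the slicing setup and the good-slice degree computation are on the right track, but you need the Jerrard--Soner $W^{-1,p}$ estimate applied to the twisted map $\tilde u=\phi^{-1}u$ to produce both the $|\log\epsilon|$ factor and the additive structure; the 1D Modica--Mortola/Jerrard--Sandier argument is a dead end here.
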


We were inspired to search for an estimate of this type by the work of Almeida \cite{Alme1}, \cite{Alme2} (see also \cite{AlmeBeth}), in which it is shown that complex-valued maps on a two-dimensional annulus with suitably bounded $E_{\epsilon}$ can be assigned a generalized degree, and that the minimum energy needed to connect two maps of different degrees is $\pi|\log\epsilon|$ to leading order as $\epsilon\to 0$. The estimate (\ref{mainlemmaest}) provides some extension of these results to higher dimensions, where now the nearest point in $\Lambda$ to $h_u$ takes on the role of degree, and we note that if $h_{u_0}$ and $h_{u_1}$ lie near different elements of $\Lambda$, then any path connecting $u_0$ to $u_1$ must pass through a map $v$ with $dist_b(h_v,\Lambda)=\pi$. We suspect that more precise estimates for the $|\log\epsilon|$ term in these higher-dimensional threshold transition energies will involve the masses of the min-max $(n-2)$-varifolds associated to classes in $H^1(M;\mathbb{Z})\cong \pi_1(\mathcal{Z}_{n-2}(M;\mathbb{Z}))$ by Almgren's work \cite{A}.

\begin{remark}\label{hrk} Note that while $u\mapsto ju$ does not define a continuous map from $W^{1,2}(M,\mathbb{C})$ into $L^2$ one-forms, it is evidently continuous as a map to the space of $L^1$ one-forms. And since projection onto the finite-dimensional subspace $\mathcal{H}^1(M)$ is continuous on the space of $L^1$ one-forms, it follows that 
$$u\mapsto h_u\text{ defines a continuous map }W^{1,2}(M,\mathbb{C})\to\mathcal{H}^1(M).$$
In particular, the quantities on the right-hand side of (\ref{mainlemmaest}) vary continuously with $u\in W^{1,2}$--a very simple but important observation which we will use without comment throughout the paper.
\end{remark}

\begin{proof} Since $M$ is oriented, for each $\gamma_i: S^1\to M$, we can choose an embedding
$$F_i: S^1\times B_1^{n-1}\to M$$
onto a tubular neighborhood of $\gamma_i(S^1)$ such that
\begin{equation}\label{fihom}
F_i|_{S^1\times 0}=\gamma_i
\end{equation}
and
\begin{equation}\label{flipbds}
Lip(F_i),Lip(F_i^{-1})\leq C(M).
\end{equation}

\hspace{6mm} Consider $u\in C^{\infty}(M,\mathbb{C})$ satisfying (\ref{energbdhyp}). By (\ref{flipbds}), we have
\begin{equation}
\int_{S^1\times B^{n-1}}e_{\epsilon}(u\circ F_i)\leq CE_{\epsilon}(u),
\end{equation}
so defining
\begin{equation}\label{gidef}
G_i:=\{y\in B^{n-1}\mid \int_{S^1\times y}e_{\epsilon}(u\circ F_i)\leq \frac{2CE_{\epsilon}(u)}{|B_1^{n-1}|}\},
\end{equation}
it follows from Fubini's theorem that
\begin{equation}\label{gimeasbd}
|G_i|\geq \frac{1}{2}|B_1^{n-1}|.
\end{equation}

\hspace{6mm} Writing
$$W(v)=\frac{1}{4}(1-|v|^2)^2,$$
for any $v\in C^{\infty}(S^1,\mathbb{C})$ with $|v|\leq 1$, we recall the standard computation 
\begin{eqnarray*}
\max_{S^1}W(v)&\leq &\int_{S^1}|d(W(v))|+\frac{1}{2\pi}\int_{S^1}W(v)\\
&\leq & \int_{S^1}|(1-|v|^2)||dv|+\frac{\epsilon^2}{2\pi}\int_{S^1}\frac{W(v)}{\epsilon^2}\\
&\leq &\int_{S^1}\epsilon \frac{1}{2}|dv|^2+2\epsilon\frac{W(v)}{\epsilon^2}+\frac{\epsilon^2}{2\pi}\frac{W(v)}{\epsilon^2}\\
&\leq &3\epsilon\int_{S^1}e_{\epsilon}(v).
\end{eqnarray*}
For every $y\in G_i$, it follows in particular that
$$\max_{S^1\times y}W(u\circ F_i)\leq C\epsilon^{1/2},$$
and consequently 
\begin{equation}\label{gammaubds}
\max_{S^1\times y}(1-|u\circ F_i|^2)\leq \frac{1}{2}
\end{equation}
provided $\epsilon^{1/2}\leq \frac{1}{16C}$.

\hspace{6mm} Consider now the Hodge decomposition
$$ju=d\psi+d^*\xi+h_u$$
of $ju$, and decompose $h_u$ further into its integral and fractional parts
$$h_u=j\phi+h',$$
where $\phi:M\to S^1$ is a harmonic map to $S^1$, and $h'$ satisfies
\begin{equation}\label{hprimedef}
|h'|_b=\dist_b(h_u,\Lambda)\leq \pi.
\end{equation}
(Note that this decomposition of $h_u$ is unique if and only if $|h'|_b<\pi$.)
For $y\in B_1^{n-1}$, denoting by $\gamma_{i,y}:S^1\to M$ the curve 
$$\gamma_{i,y}(\theta)=F_i(\theta,y),$$
we observe that, since $\gamma_{i,y}$ is homotopic to $\gamma_i$,
\begin{eqnarray*}
\int_{\gamma_{i,y}}ju&=&\int_{\gamma_{i,y}}(d^*\xi+h_u)\\
&=&\int_{\gamma_{i,y}}d^*\xi+\int_{\gamma_i}h_u\\
&=&\int_{\gamma_{i,y}}d^*\xi+\int_{\gamma_i}h'+2\pi \deg(\phi,\gamma_i),
\end{eqnarray*}
so that
\begin{equation}\label{distjuzest1}
\dist\left(\int_{\gamma_{i,y}}ju,2\pi\mathbb{Z}\right)=\dist\left(\int_{\gamma_{i,y}}d^*\xi+\int_{\gamma_i}h',2\pi\mathbb{Z}\right).
\end{equation}
On the other hand, if $y\in G_i$, then we can use (\ref{gammaubds}) to write
\begin{eqnarray*}
\int_{\gamma_{i,y}}ju&=&\int_{\gamma_{i,y}}|u|^2j(u/|u|)\\
&=&\int_{\gamma_{i,y}}\frac{(|u|^2-1)}{|u|^2}ju+2\pi \deg(u/|u|,\gamma_{i,y}),
\end{eqnarray*}
and since (by (\ref{gidef}) and (\ref{flipbds}))
$$\int_{\gamma_{i,y}}e_{\epsilon}(u)\leq CE_{\epsilon}(u),$$
we conclude that
\begin{eqnarray*}
\dist\left(\int_{\gamma_{i,y}}ju,2\pi\mathbb{Z}\right)&\leq& 2\int_{\gamma_{i,y}}(1-|u|^2)|du|\\
&\leq & \int_{\gamma_{i,y}}\epsilon |du|^2+4\epsilon\frac{W(u)}{\epsilon^2}\\
&\leq &C\epsilon E_{\epsilon}(u).
\end{eqnarray*}
Combining the preceding estimate with (\ref{distjuzest1}) and using the fact that 
$$|\int_{\gamma_i}h'|\leq |h'|_b\leq \pi,$$
it follows that
\begin{equation}\label{gammaixiest}
\int_{\gamma_{i,y}}|d^*\xi|\geq |\int_{\gamma_i}h'|-C\epsilon E_{\epsilon}(u)
\end{equation}
for every $y\in G_i$. Integrating (\ref{gammaixiest}) over $y\in G_i$ and using (\ref{flipbds}) to pass estimates between $S^1\times B^{n-1}$ and $F_i(S^1\times B^{n-1})$, we obtain an estimate of the form
\begin{equation}
\int_{F_i(S^1\times G_i)}|d^*\xi|\geq C^{-1}|\int_{\gamma_i}h'|-C\epsilon E_{\epsilon}(u).
\end{equation}
Choosing $i$ such that 
$$|\int_{\gamma_i}h'|=|h'|_b=dist_b(h_u,\Lambda),$$
we then arrive at the $L^1$ lower bound
\begin{equation}\label{dstarxil1bd}
\|d^*\xi\|_{L^1}\geq C^{-1}\dist_b(h_u,\Lambda)-C\epsilon E_{\epsilon}(u).
\end{equation}

\hspace{6mm} By H\"{o}lder's inequality, (\ref{dstarxil1bd}) evidently gives us a lower bound for $\|d^*\xi\|_{L^p}$ for any $p>1$, and applying the $L^p$ regularity for the Hodge Laplacian (see, e.g., \cite{Scott}) we obtain for $dju=dd^*\xi$ the $W^{-1,p}$ estimates
\begin{equation}
C(p,M)\|dju\|_{W^{-1,p}}\geq C^{-1}\dist_b(h_u,\Lambda)-C\epsilon E_{\epsilon}(u).
\end{equation}
In particular, fixing $p_n:=\frac{2n}{2n-1}=(2n)^*$, so that 
$$W^{-1,2}\hookrightarrow W^{-1,p_n}=(W^{1,2n})^*\hookleftarrow (C^{1/2})^*,$$ 
we record
\begin{equation}\label{chalfdualest}
\|dju\|_{W^{-1,p_n}}\geq C^{-1}\dist_b(h_u,\Lambda)-C\epsilon E_{\epsilon}(u),
\end{equation}
where $C=C(M)$.

\hspace{6mm} Next, by the fundamental estimates of Jerrard and Soner \cite{JSo} (see also \cite{BOSapp} for some related results and improved estimates when $n\geq 3$) we recall that for any $v\in C^{\infty}(M,\mathbb{C})$, 
\begin{equation}\label{jsest}
\|djv\|_{W^{-1,p_n}}=\|djv\|_{(W^{1,2n})^*}\leq C\left(\frac{E_{\epsilon}(v)}{|\log\epsilon|}+\epsilon^{\gamma}\right)
\end{equation}
for some $\gamma=\gamma(n)$ and $C=C(M)$. If we applied (\ref{jsest}) directly to the map $u$ in question, (\ref{chalfdualest}) would immediately yield the $|\log\epsilon|$ portion of the desired lower bound (\ref{mainlemmaest}), but would miss the $\|h_u\|_{L^2}^2$ part of the estimate. In order to bring the $\|h_u\|_{L^2}$ terms into the estimate, we will instead apply (\ref{jsest}) to the map
\begin{equation}\label{tildeudef}
\tilde{u}=\phi^{-1}\cdot u
\end{equation}
(where, recall, $\phi:M\to S^1$ is the harmonic map for which $j\phi$ gives the integral part of $h_u$). 

\hspace{6mm} For this modified map $\tilde{u}$, one checks directly that
\begin{equation}\label{jtildeu}
j\tilde{u}=ju-|u|^2j\phi
\end{equation}
and
\begin{equation}\label{enertildeu}
E_{\epsilon}(\tilde{u})=E_{\epsilon}(u)+\int_M\frac{1}{2}|u|^2|j\phi|^2-\langle ju,j\phi\rangle.
\end{equation}
By (\ref{jtildeu}), we see that 
$$\int\langle dj\tilde{u}-dju,\zeta\rangle=\int \langle -|u|^2j\phi, d^*\zeta\rangle$$
for any two-form $\zeta$. But since $j\phi$ is closed, we also have $\int\langle j\phi, d^*\zeta\rangle=0$, so in fact
\begin{eqnarray*}
\int\langle dj\tilde{u}-dju,\zeta\rangle&=&\int (1-|u|^2)\langle j\phi, d^*\zeta\rangle\\
&\leq &\|\zeta\|_{W^{1,2}}\|j\phi\|_{\infty}\left(\int (1-|u|^2)^2\right)^{1/2}\\
&\leq &\|\zeta\|_{W^{1,2}}\|j\phi\|_{\infty}\cdot 2\epsilon E_{\epsilon}(u)^{1/2}.
\end{eqnarray*}
By (\ref{energbdhyp}) and the harmonicity of $j\phi$, it then follows that
$$\|dj\tilde{u}-dju\|_{W^{-1,2}}\leq C\epsilon^{1/2} \|j\phi\|_{L^2},$$
and since\footnote{Using the fact that $|h'|_b\leq \pi$ by definition, and $\|\cdot\|_{L^2}\leq C|\cdot|_b$ on the finite-dimensional vector space $\mathcal{H}^1(M)$.}
$$\|j\phi\|_{L^2}\leq \|h_u\|_{L^2}+\|h'\|_{L^2}\leq \|h_u\|_{L^2}+C,$$
we conclude that
\begin{equation}\label{djdiffs}
\|dj\tilde{u}-dju\|_{W^{-1,2}}\leq C\epsilon^{1/2}\|h_u\|_{L^2}^2+C\epsilon^{1/2}.
\end{equation}
In particular, since $p_n=\frac{2n}{2n-1}\leq 2$, it follows from (\ref{djdiffs}) and (\ref{chalfdualest}) that
\begin{equation}
\|dj\tilde{u}\|_{W^{-1,p_n}}\geq C^{-1}\dist_b(h_u,\Lambda)-C\epsilon^{1/2}(1+\|h_u\|_{L^2}^2),
\end{equation}
and applying (\ref{jsest}) to $\tilde{u}$, we arrive at a bound of the form
\begin{equation}\label{mainetildeubd}
E_{\epsilon}(\tilde{u})\geq C^{-1}\dist_b(h_u,\Lambda)|\log\epsilon|-C\epsilon^{\beta}(1+\|h_u\|_{L^2}^2),
\end{equation}
where $\beta(n):=\min\{\frac{1}{2},\gamma(n)\}$.

\hspace{6mm} Finally, we use (\ref{enertildeu}) to compute
\begin{eqnarray*}
E_{\epsilon}(u)&=&E_{\epsilon}(\tilde{u})+\int_M\langle ju,j\phi\rangle-\frac{1}{2}|u|^2|j\phi|^2\\
&=&E_{\epsilon}(\tilde{u})+\int_M|j\phi|^2-\frac{1}{2}|u|^2|j\phi|^2+\langle h',j\phi\rangle\\
&\geq &E_{\epsilon}(\tilde{u})+\int_M\frac{1}{2}|h_u|^2-\frac{1}{2}|h'|^2\\
&\geq &E_{\epsilon}(\tilde{u})+\frac{1}{2}\|h_u\|_{L^2}^2-C\dist_b(h_u,\Lambda),
\end{eqnarray*}
so that, by (\ref{mainetildeubd}), we have
$$E_{\epsilon}(u)\geq \frac{1}{2}(1-C\epsilon^{\beta})\|h_u\|_{L^2}^2+(C^{-1}|\log\epsilon|-C)\dist_b(h_u,\Lambda)-C\epsilon^{\beta}.$$
Setting $\alpha(n):=\frac{1}{2}\beta(n)$ and taking $\epsilon\leq\epsilon_0(M)$ sufficiently small, we arrive at the desired estimate 
\begin{equation}
E_{\epsilon}(u)\geq \frac{1}{2}(1-\epsilon^{\alpha})\|h_u\|_{L^2}^2+C^{-1}|\log\epsilon|\cdot \dist_b(h_u,\Lambda)-\epsilon^\alpha.
\end{equation}
\end{proof}

\section{The Energy Concentration Varifold for General Solutions}

\hspace{6mm} Consider now an arbitrary family of solutions $u_{\epsilon}$ of the Ginzburg-Landau equations
\begin{equation}\label{gleqns}
\Delta u_{\epsilon}=-\epsilon^{-2}(1-|u_{\epsilon}|^2)u_{\epsilon}
\end{equation}
on a compact, orientable $(M^n,g)$, satisfying an energy bound of the form
\begin{equation}\label{obvlogbd}
E_{\epsilon}(u_{\epsilon})\leq C|\log\epsilon|
\end{equation}
for small $\epsilon$. Let the one-form $ju_{\epsilon}$ have Hodge decomposition
\begin{equation}
ju_{\epsilon}=d^*\xi_{\epsilon}+h_{\epsilon},
\end{equation}
and, as in the proof of Proposition \ref{threshprop}, decompose $h_{\epsilon}$ into an integral and fractional part
\begin{equation}
h_{\epsilon}=j\phi_{\epsilon}+h_{\epsilon}',
\end{equation}
where $\phi_{\epsilon}:M\to S^1$ is harmonic and $|h_{\epsilon}'|_b\leq \pi$. In this section, we establish the following concentration result (cf. Theorems A and B of \cite{BOSp}), from which the variant stated in Theorem \ref{vortconc1} follows immediately:

\begin{thm}\label{vortconc} With $u_{\epsilon}$ and $\phi_{\epsilon}$ as above, write 
$$\tilde{u}_{\epsilon}:=\phi_{\epsilon}^{-1}u_{\epsilon},$$
and let
$$T_{\epsilon}(\tilde{u}_{\epsilon}):=e_{\epsilon}(\tilde{u}_{\epsilon})Id-d\tilde{u}_{\epsilon}^* d\tilde{u}_{\epsilon}$$
denote the stress energy tensor of $\tilde{u}_{\epsilon}$ associated with the functional $E_{\epsilon}$. Then there exists a subsequence $\epsilon_j\to 0$ and a stationary, rectifiable $(n-2)$-varifold $V$ such that 
$$\frac{1}{|\log\epsilon_j|}T_{\epsilon_j}(\tilde{u}_{\epsilon_j})\to V$$
as generalized $(n-2)$-varifolds in the sense of \cite{AS}. Moreover, the mass $\|V\|(M)$ of this varifold is given by
\begin{equation}\label{vortmass}
\|V\|(M)=\lim_{\epsilon_j\to 0}\frac{1}{|\log\epsilon_j|}(E_{\epsilon_j}(u_{\epsilon_j})-\frac{1}{2}\|h_{\epsilon_j}\|_{L^2}^2).
\end{equation}
\end{thm}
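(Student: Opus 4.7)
The plan is to reduce Theorem \ref{vortconc} to the energy concentration results already available in the $b_1(M)=0$ regime (\cite{Stern}, after \cite{BBO}, \cite{BOSp}, \cite{AS}) by passing from $u_\epsilon$ to the twisted map $\tilde u_\epsilon = \phi_\epsilon^{-1}u_\epsilon$, which by design carries uniformly bounded harmonic part, and then verifying that the Ambrosio--Soner framework still applies in spite of the defect introduced in the Euler--Lagrange equation by the twist.

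First, I would establish by direct computation the energy identity
$$E_\epsilon(\tilde u_\epsilon) = E_\epsilon(u_\epsilon) - \tfrac{1}{2}\|h_\epsilon\|_{L^2}^2 + \tfrac{1}{2}\|h_\epsilon'\|_{L^2}^2 - \tfrac{1}{2}\int_M (1-|u_\epsilon|^2)|j\phi_\epsilon|^2,$$
using $|\tilde u_\epsilon|=|u_\epsilon|$, the local formula $|d\tilde u_\epsilon|^2 = |du_\epsilon|^2 - 2\langle ju_\epsilon, j\phi_\epsilon\rangle + |u_\epsilon|^2|j\phi_\epsilon|^2$, and Hodge orthogonality of $h_\epsilon$ with $d^*\xi_\epsilon$. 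Since $|h_\epsilon'|_b\leq\pi$ gives $\|h_\epsilon'\|_{L^2}=O(1)$, and since $\|j\phi_\epsilon\|_\infty \leq C|\log\epsilon|^{1/2}$ (finite-dimensionality of $\mathcal{H}^1(M)$) while $\int(1-|u_\epsilon|^2)=O(\epsilon|\log\epsilon|^{1/2})$ (Cauchy--Schwarz), the last two terms are $o(1)$; hence $E_\epsilon(\tilde u_\epsilon) = E_\epsilon(u_\epsilon) - \tfrac12\|h_\epsilon\|_{L^2}^2 + o(1)$. An analogous harmonic projection argument applied to $j\tilde u_\epsilon = ju_\epsilon - |u_\epsilon|^2 j\phi_\epsilon$ shows that $\|h_{\tilde u_\epsilon}\|_{L^2}$ is uniformly bounded, confirming that $\tilde u_\epsilon$ carries no asymptotic harmonic content.

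Next I would run the BBO/Ambrosio--Soner arguments on $\tilde u_\epsilon$. A short calculation, locally writing $\phi_\epsilon = e^{i\theta_\epsilon}$ with $\Delta\theta_\epsilon=0$, yields a modified equation
$$\Delta\tilde u_\epsilon + \epsilon^{-2}(1-|\tilde u_\epsilon|^2)\tilde u_\epsilon = R_\epsilon,$$
where $R_\epsilon$ is linear in $j\phi_\epsilon$ and first order in $d\tilde u_\epsilon$. The associated stress-energy tensor then satisfies $\operatorname{div} T_\epsilon(\tilde u_\epsilon) = \langle R_\epsilon, d\tilde u_\epsilon\rangle$; testing against a vector field and invoking $dj\phi_\epsilon = 0 = d^*j\phi_\epsilon$ lets one control this quantity in $W^{-1,p_n}$ by $o(|\log\epsilon|)$. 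Combined with the Jerrard--Soner estimates on $dj\tilde u_\epsilon$ (already used in Proposition \ref{threshprop}) and the standard clearing-out lower bounds, this places $T_\epsilon(\tilde u_\epsilon)/|\log\epsilon|$ inside the scope of \cite{AS}, producing a subsequential limit $V$---stationary and rectifiable of dimension $n-2$---with $\|V\|(M) = \lim E_\epsilon(\tilde u_\epsilon)/|\log\epsilon|$; the mass formula (\ref{vortmass}) then follows immediately from the identity of the previous paragraph.

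The chief obstacle I foresee is the divergence estimate itself: $\|j\phi_\epsilon\|_\infty$ may grow like $|\log\epsilon|^{1/2}$, so the cancellations between $R_\epsilon$ and $d\tilde u_\epsilon$ must be exhibited at the distributional level (after integration by parts using the harmonicity of $j\phi_\epsilon$) rather than pointwise. A minor technicality is the potential non-uniqueness of $\phi_\epsilon$ when $|h_\epsilon'|_b=\pi$; this is resolved by passing to a suitable subsequence along which the selection stabilizes.
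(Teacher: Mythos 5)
Your high-level strategy matches the paper's: twist by the harmonic phase $\phi_\epsilon$, verify $\tilde u_\epsilon$ has bounded harmonic content, bound the first variation of $T_\epsilon(\tilde u_\epsilon)/|\log\epsilon|$, and invoke Ambrosio--Soner; and your exact energy identity is a sharper form of what the paper records as (\ref{etildmassest}). But your route to the first-variation bound is substantively different and, as you yourself flag, not actually carried out. Passing through the twisted equation $\Delta\tilde u_\epsilon + \epsilon^{-2}(1-|\tilde u_\epsilon|^2)\tilde u_\epsilon = R_\epsilon$ and bounding $\int\langle R_\epsilon,\nabla_X\tilde u_\epsilon\rangle$ is delicate because $R_\epsilon$ contains the term $-2i\langle j\phi_\epsilon,d\tilde u_\epsilon\rangle$, producing a quadratic expression in $d\tilde u_\epsilon$ with a coefficient $j\phi_\epsilon$ of size $|\log\epsilon|^{1/2}$; identifying the required cancellations distributionally is exactly the work. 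The paper sidesteps the PDE entirely: it expands $T_\epsilon(\tilde u_\epsilon)$ algebraically in terms of $T_\epsilon(u_\epsilon)$, $j\phi_\epsilon$, $h_\epsilon'$, $d^*\xi_\epsilon$, and $1-|u_\epsilon|^2$, and observes that $\operatorname{div} T_\epsilon(u_\epsilon)=0$ while each correction built purely from harmonic one-forms is divergence-free (for harmonic $a,b$ one has $\operatorname{div}\left(\langle a,b\rangle\operatorname{Id}-2a\odot b\right)=0$). The surviving terms involve only $d^*\xi_\epsilon$ (controlled via Jerrard--Soner) and $1-|u_\epsilon|^2$ (controlled via Cauchy--Schwarz), giving the bound $C|\log\epsilon|^{1/2}\|\nabla X\|_\infty$ directly. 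If you want to pursue the PDE route, you must actually exhibit the integration by parts; as written this is a gap.

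The more serious omission is the density lower bound. Proposition \ref{asprop} requires $\Theta^{n-2}(\|V\|,x)\geq\eta>0$ at every $x\in\operatorname{spt}\|V\|$, and ``standard clearing-out'' for $u_\epsilon$ does not deliver this by itself. Two further steps are needed, both of which the paper carries out and your outline does not address. First, the $\eta$-ellipticity result (Proposition \ref{etaellip}) applies to GL solutions $u_\epsilon$, not to $\tilde u_\epsilon$; the paper transfers it to $\tilde u_\epsilon$ (Lemma \ref{tildeetaellip}) using the monotonicity formula for $u_\epsilon$ to control $\int_{B_r}\langle ju_\epsilon,j\phi_\epsilon\rangle$ by $Cr^{n-1}|\log\epsilon|$. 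Second, after clearing-out forces $|u_\epsilon|>\tfrac12$ on $B_r(x)$ at a putative low-density point, one still must show $\|V\|(B_{r/2}(x))=0$. This requires introducing $\alpha_\epsilon:=\psi(|u_\epsilon|^2)ju_\epsilon - j\phi_\epsilon$, noting that the co-exact component $d^*\zeta_\epsilon$ of its Hodge decomposition is harmonic on $\{|u_\epsilon|\geq\tfrac12\}$, and combining interior elliptic estimates with the uniform $L^1$ bound on $d^*\xi_\epsilon$ to show $\|d^*\zeta_\epsilon\|_{L^2(B_{r/2})}=O(1)$; that the limiting measure coincides with $\lim\tfrac12|d^*\zeta_{\epsilon_j}|^2/|\log\epsilon_j|\,dv_g$ is itself a nontrivial identity relying on the $BOSe$-type bound $\int_M[e_\epsilon(u_\epsilon)-\tfrac12|ju_\epsilon|^2]\leq C$. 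Without these ingredients the proposal does not establish rectifiability.
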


\hspace{6mm} We recall from \cite{AS} that a generalized $m$-varifold on $M$ is a nonnegative Radon measure on the compact subbundle
$$A_m(M):=\{S\in End(TM)\mid S=S^*,-nId\leq S\leq Id,\text{ }tr(S)\geq m\}.$$
of $End(TM)$ consisting of symmetric endomorphisms with eigenvalues in $[-n,1]$ and trace $\geq m$. Just as for standard varifolds (see, e.g., \cite{All},\cite{Si}), the weight measure $\|V\|$ of a generalized $m$-varifold $V$ is the Radon measure on $M$ given by the pushforward of $V$ under the projection $A_m(M)\to M$, and the first variation $\delta V$ is the functional on $C^1$ vector fields defined by
\begin{equation}
\delta V(X)=\int_{A_m(M)}\langle S,\nabla X\rangle dV(S);
\end{equation}
$V$ is said to be stationary if $\delta V=0$. In the proof of Theorem \ref{vortconc} (as in \cite{BBO}, \cite{BOSp}, and \cite{Stern}), we will rely on the following measure-theoretic result of \cite{AS}:
\begin{prop}[\cite{AS}]\label{asprop} If $V$ is a generalized $m$-varifold for which $|\delta V(X)|\leq C\cdot |X|_{C^0}$, and $\Theta^m(\|V\|,x)\geq \eta$ at every $x\in spt(V)$ for some $\eta>0$, then there is a rectifiable $m$-varifold $\tilde{V}$ with $\|\tilde{V}\|=\|V\|$ and $\delta \tilde{V}=\delta V$.
\end{prop}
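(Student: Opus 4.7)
The plan breaks into three stages: first derive a monotonicity formula for the weight $\|V\|$ and use it together with the density hypothesis to show $\|V\|$ is $m$-rectifiable; second, define $\tilde V$ as the canonical rectifiable varifold built from $\|V\|$; third, perform a blow-up analysis at $\|V\|$-a.e.\ point to identify the fibrewise mean of $S$ with the orthogonal projection onto the approximate tangent plane, which yields $\delta \tilde V = \delta V$.

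\emph{Step 1 (monotonicity and rectifiability of $\|V\|$).} Working in a normal chart near $x$, I would test $\delta V$ against $X(y)=f(|y-x|)(y-x)$ for a smooth cutoff $f$. Using the pointwise constraints $\mathrm{tr}(S)\geq m$ and $-n\,\mathrm{Id}\leq S\leq \mathrm{Id}$ on the fibres of $A_m(M)$, a direct computation of $\langle S,\nabla X\rangle$ together with the hypothesis $|\delta V(X)|\leq C\|X\|_{C^0}$ yields a standard almost-monotonicity inequality
\begin{equation*}
\tfrac{d}{dr}\bigl(e^{Cr} r^{-m}\|V\|(B_r(x))\bigr)\geq -Cr^{1-m}.
\end{equation*}
This controls $\Theta^{*m}(\|V\|,x)$ from above and gives $\|V\|$ locally bounded generalized mean curvature. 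Combining the upper density bound from monotonicity with the assumed lower bound $\Theta^m(\|V\|,x)\geq \eta>0$ on $spt(V)$, the Preiss/Mattila-type rectifiability criterion employed in \cite{AS} implies that $\|V\|$ is $m$-rectifiable: there exist an $m$-rectifiable set $E\subset M$ and a Borel density $\theta:E\to(0,\infty)$ with $\|V\|=\theta\,\mathcal{H}^m\llcorner E$. Setting $\tilde V:=v(E,\theta)$ to be the rectifiable $m$-varifold associated to $(E,\theta)$---viewed as a generalized varifold via the embedding $T\mapsto P_T$ of the Grassmann bundle into $A_m(M)$---we automatically get $\|\tilde V\|=\|V\|$, and the identity $\delta \tilde V=\delta V$ reduces to a pointwise identification of endomorphisms.

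\emph{Step 2 (tangent endomorphism identification).} Disintegrate $V=\|V\|\otimes\nu_x$, where $\nu_x$ is a probability measure on the fibre $A_m(T_xM)$, and set $\bar S(x):=\int S\,d\nu_x(S)$. The goal is $\bar S(x)=P_{T_xE}$ at $\|V\|$-a.e.\ $x\in E$. I would blow up at a Lebesgue point of both $E$ and $\bar S$: rescaling around $x$ by $r\to 0$ produces tangent generalized varifolds $V_\infty$ on $\mathbb{R}^n$ whose weight is $\theta(x)\,\mathcal{H}^m\llcorner T_xE$ and whose first variation vanishes (the bound $|\delta V(X)|\leq C\|X\|_{C^0}$ produces an error of order $r^{1-m}|\delta V|(B_r(x))=O(r)$ after rescaling, which is absorbed in the limit). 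The main obstacle, and the heart of the argument, is to show that such a stationary $V_\infty$ with planar weight must have its conditional $\nu^\infty_y$ concentrated on the single point $P_{T_xE}$. Testing $\delta V_\infty=0$ against $X(y)=f(y)w$ and exploiting translation invariance of $\|V_\infty\|$ along $T_xE$, one finds that $\bar S^\infty(y)$ must leave $T_xE$ invariant and act as the identity there; the extremal constraints $\mathrm{tr}(\bar S^\infty)\geq m$ and $\bar S^\infty\leq \mathrm{Id}$ then force $\bar S^\infty(y)=P_{T_xE}$ for $\|V_\infty\|$-a.e.\ $y$. Transferring back via Lebesgue differentiation yields $\bar S(x)=P_{T_xE}$ at $\|V\|$-a.e.\ $x\in E$, so
\begin{equation*}
\delta V(X)=\int_M\langle \bar S(x),\nabla X\rangle\,d\|V\|(x)=\int_E\langle P_{T_xE},\nabla X\rangle\,\theta\,d\mathcal{H}^m=\delta\tilde V(X),
\end{equation*}
completing the proof. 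The hard step throughout is the extremality-and-stationarity argument in the blow-up: everything else is a standard application of density/rectifiability machinery, but this endomorphism-valued rigidity statement is precisely what distinguishes generalized varifolds from classical ones and is where the fibre constraints $-n\,\mathrm{Id}\leq S\leq \mathrm{Id}$, $\mathrm{tr}(S)\geq m$ play an essential role.
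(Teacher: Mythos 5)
The paper does not prove this proposition---it is stated with attribution to Ambrosio and Soner \cite{AS} and invoked as a black box in the proof of Theorem 3.1, so there is no proof in the paper against which to compare. That said, your reconstruction does track the Ambrosio--Soner argument at the level of outline: almost-monotonicity for $\|V\|$ from testing $\delta V$ against radial vector fields and exploiting the fibre constraints $\mathrm{tr}\,S\geq m$, $S\leq \mathrm{Id}$; two-sided density bounds and hence rectifiability of $\|V\|$; and a blow-up argument to identify the fibre mean $\bar S(x)$ with $P_{T_xE}$ at $\|V\|$-a.e.\ $x$. One misattribution in the blow-up step is worth flagging. Testing $\delta V_\infty=0$ against arbitrary compactly supported $X$ (using that the \emph{normal} derivatives of $X$ at points of $P=T_xE$ are unconstrained by $X|_P$) forces the mixed entries of $\bar S^\infty$ to vanish, i.e.\ $\bar S^\infty e_j=0$ for $e_j\perp P$ and hence, by symmetry, $\bar S^\infty$ is block-diagonal with vanishing block on $P^\perp$. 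Stationarity yields no information whatsoever about the restriction of $\bar S^\infty$ to $P$; it is only the constraints $\mathrm{tr}\,\bar S^\infty\geq m$ and $\bar S^\infty\leq\mathrm{Id}$ that then force $\bar S^\infty|_P=\mathrm{Id}_P$. Your phrasing assigns the identity-on-$P$ conclusion to stationarity and the vanishing of the off-block to the trace/eigenvalue constraints, which is backwards, though the endpoint $\bar S^\infty=P_{T_xE}$ is of course the same and the argument as a whole goes through.
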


To apply Proposition \ref{asprop} in our setting (as in \cite{AS},\cite{BBO},\cite{BOSp},\cite{Stern}), we identify the tensors $T_{\epsilon}(\tilde{u})$ with elements of $C^0(A_{n-2}(M))^*$ as follows: observe that at a point $p\in M$ with $e_{\epsilon}(\tilde{u})(p)\neq 0$, the tensor
$$S(p)=Id-e_{\epsilon}(\tilde{u})^{-1}d\tilde{u}^*d\tilde{u}$$
defines a symmetric endomorphism of $TM$ with 
$$tr(S)=n-\frac{|du|^2}{e_{\epsilon}(u)}\geq n-2$$
and
$$-|X|^2\leq\langle SX,X\rangle=|X|^2-\frac{|\langle du,X\rangle|^2}{e_{\epsilon}(u)}\leq |X|^2,$$
so that $S(p)\in A_{n-2}(M)$. Thus, for $f\in C^0(A_{n-2}(M))$, we can set
\begin{equation}
\langle T_{\epsilon}(\tilde{u}),f\rangle:=\int_Me_{\epsilon}(\tilde{u})f(S(p))dv_g
\end{equation}
so that $T_{\epsilon}(\tilde{u})$ defines a generalized $(n-2)$-varifold with weight measure
\begin{equation}
e_{\epsilon}(\tilde{u})dv_g.
\end{equation}

Unlike the stress-energy tensors of the original maps $u_{\epsilon}$ solving (\ref{gleqns}), the tensors $T_{\epsilon}(\tilde{u}_{\epsilon})$ are not in general divergence-free, and therefore don't themselves define stationary generalized varifolds. However, as we'll see in the proof of Theorem \ref{vortconc}, the generalized $(n-2)$-varifolds $\frac{T_{\epsilon}(\tilde{u}_{\epsilon})}{|\log\epsilon|}$ will nonetheless have a stationary limit, to which we can apply Proposition \ref{asprop}. Furthermore, we remark that for a generalized varifold $V$ of the sort we're working with (which decomposes like a multiple of a Dirac mass in each fiber of $A_{n-2}(M)$), it follows directly from the arguments of \cite{AS} that the varifold $\tilde{V}$ constructed in Proposition \ref{asprop} is in fact equal to $V$.

\begin{proof} As in the proof of Proposition \ref{threshprop}, we note that $j\tilde{u}_{\epsilon}$ is given by
\begin{equation}
j\tilde{u}_{\epsilon}:=ju_{\epsilon}-|u_{\epsilon}|^2j\phi_{\epsilon},
\end{equation}
and use this to compute
\begin{eqnarray*}
d\tilde{u}_{\epsilon}^*d\tilde{u}_{\epsilon}&=&d|u_{\epsilon}|\otimes d|u_{\epsilon}|+|u_{\epsilon}|^{-2}j\tilde{u}_{\epsilon}\otimes j\tilde{u}_{\epsilon}\\
&=&du_{\epsilon}^*du_{\epsilon}-ju_{\epsilon}\otimes j\phi_{\epsilon}-j\phi_{\epsilon}\otimes ju_{\epsilon}+|u_{\epsilon}|^2j\phi_{\epsilon}\otimes j\phi_{\epsilon}\\
&=&du_{\epsilon}^*du_{\epsilon}-j\phi_{\epsilon}\otimes j\phi_{\epsilon}-(1-|u_{\epsilon}|^2)j\phi_{\epsilon}\otimes j\phi_{\epsilon}\\
&&-(d^*\xi_{\epsilon}+h_{\epsilon}')\otimes j\phi_{\epsilon}-j\phi_{\epsilon}\otimes (d^*\xi_{\epsilon}+h_{\epsilon}')
\end{eqnarray*}
and
\begin{equation}\label{etildeucomp}
e_{\epsilon}(\tilde{u}_{\epsilon})=e_{\epsilon}(u_{\epsilon})-\frac{1}{2}|j\phi_{\epsilon}|^2-\langle d^*\xi_{\epsilon}+h_{\epsilon}',j\phi_{\epsilon}\rangle-\frac{1}{2}(1-|u_{\epsilon}|^2)|j\phi_{\epsilon}|^2,
\end{equation}
from which we obtain
\begin{eqnarray*}
T_{\epsilon}(\tilde{u}_{\epsilon})&=&T_{\epsilon}(u_{\epsilon})-(\frac{1}{2}|j\phi_{\epsilon}|^2Id-j\phi_{\epsilon}\otimes j\phi_{\epsilon})\\
&&-[\langle d^*\xi_{\epsilon}+h_{\epsilon}',j\phi_{\epsilon}\rangle Id-2(d^*\xi_{\epsilon}+h_{\epsilon})\odot j\phi_{\epsilon}]\\
&&-(1-|u_{\epsilon}|^2)[\frac{1}{2}|j\phi_{\epsilon}|^2Id-j\phi_{\epsilon}\otimes j\phi_{\epsilon}].
\end{eqnarray*}
Now, since $u_{\epsilon}$ solves (\ref{gleqns}), we have $div[T_{\epsilon}(u_{\epsilon})]=0$, and since $j\phi_{\epsilon}$ and $h_{\epsilon}'$ are harmonic, one checks directly that
$$div\left(\frac{1}{2}|j\phi_{\epsilon}|^2-j\phi_{\epsilon}\otimes j\phi_{\epsilon}\right)=0,$$
and
$$div\left(\langle h_{\epsilon}',j\phi_{\epsilon}\rangle Id-2h_{\epsilon}\odot j\phi_{\epsilon}\right)=0$$
as well. For any $C^1$ vector field $X$ on $M$, integration of these identities yields
\begin{eqnarray*}
\int_M\langle T_{\epsilon}(\tilde{u}_{\epsilon}),\nabla X\rangle&=&-\int_M\langle\langle d^*\xi_{\epsilon},j\phi_{\epsilon}\rangle-2d^*\xi_{\epsilon}\odot j\phi_{\epsilon},\nabla X\rangle\\
&&-\int_M(1-|u_{\epsilon}|^2)\langle \frac{1}{2}|j\phi_{\epsilon}|^2Id-j\phi_{\epsilon}\otimes j\phi_{\epsilon},\nabla X\rangle\\
&\leq &C\left(\|j\phi_{\epsilon}\|_{\infty}\|d^*\xi_{\epsilon}\|_{L^1}+\|j\phi_{\epsilon}\|_{\infty}^2\int_M(1-|u_{\epsilon}|^2)\right)\|\nabla X\|_{\infty}.
\end{eqnarray*}
By the harmonicity of $j\phi_{\epsilon}$, we know that
$$\|j\phi_{\epsilon}\|_{\infty}\leq C\|j\phi_{\epsilon}\|_{L^2},$$
and since the fractional part $h_{\epsilon}'$ of $h_{\epsilon}$ is uniformly bounded, it's clear that
\begin{equation}\label{jphibds}
\|j\phi_{\epsilon}\|_{L^2}^2\leq C(1+\|h_{\epsilon}\|_{L^2}^2)\leq C(1+\|ju_{\epsilon}\|_{L^2}^2)\leq C|\log\epsilon|,
\end{equation}
by (\ref{obvlogbd}). Using this in the preceding estimate, we arrive at
$$\frac{1}{\|\nabla X\|_{\infty}}\int_M\langle T_{\epsilon}(\tilde{u}_{\epsilon}),\nabla X\rangle\leq C|\log\epsilon|^{1/2}\|d^*\xi_{\epsilon}\|_{L^1}+C|\log\epsilon|\int_M(1-|u_{\epsilon}|^2).$$

\hspace{6mm}To control the $\int_M(1-|u_{\epsilon}|^2)$ term, we simply note that
$$\int_M(1-|u_{\epsilon}|^2)\leq C\left(\int_M\epsilon^2W(u_{\epsilon})\right)^{1/2}\leq C\epsilon E_{\epsilon}(u)^{1/2},$$
so that, by (\ref{obvlogbd}),
$$\frac{1}{\|\nabla X\|_{\infty}}\int_M\langle T_{\epsilon}(\tilde{u}_{\epsilon}),\nabla X\rangle\leq C|\log\epsilon|^{1/2}\|d^*\xi_{\epsilon}\|_{L^1}+C\epsilon|\log\epsilon|^{3/2}.$$
Finally, as in the proof of Proposition \ref{threshprop}, we employ the Jerrard-Soner estimate (\ref{jsest}) and the $L^p$ regularity of the Hodge Laplacian to estimate the co-exact term:
\begin{equation}\label{dstarxil1est}
\|d^*\xi_{\epsilon}\|_{L^1}\leq C\|d^*\xi_{\epsilon}\|_{L^{p_n}}\leq C\|dju_{\epsilon}\|_{W^{-1,p_n}}\leq C\left(\frac{E_{\epsilon}(u_{\epsilon})}{|\log\epsilon|}+\epsilon^{\gamma}\right)
\end{equation}
(where, as before, we've fixed some $p_n$--say $p_n=\frac{2n}{2n-1}$--between $1$ and $\frac{n}{n-1}$).
Appealing once more to the energy bounds (\ref{obvlogbd}), it follows from the preceding computations that
\begin{equation}\label{tildeiv}
\int_M\langle T_{\epsilon}(\tilde{u}_{\epsilon}),\nabla X\rangle\leq C|\log\epsilon|^{1/2}\|\nabla X\|_{\infty}
\end{equation}
for every smooth vector field $X$ on $M$.

\hspace{6mm} Next, integrating (\ref{etildeucomp}), we observe that
\begin{eqnarray*}
E_{\epsilon}(\tilde{u}_{\epsilon})&=&E_{\epsilon}(u_{\epsilon})-\frac{1}{2}\|j\phi_{\epsilon}\|_{L^2}^2-\int(\langle h_{\epsilon}',j\phi_{\epsilon}\rangle+\frac{1}{2}(1-|u_{\epsilon}|^2)|j\phi_{\epsilon}|^2)\\
&=&E_{\epsilon}(u_{\epsilon})-\frac{1}{2}\|h_{\epsilon}\|_{L^2}^2+O(\|j\phi_{\epsilon}\|_{L^2}\|h_{\epsilon}'\|_{L^2}+\|j\phi\|_{\infty}^2\epsilon \cdot E_{\epsilon}(u)^{1/2}+1),
\end{eqnarray*}
and consequently,
\begin{equation}\label{etildmassest}
E_{\epsilon}(\tilde{u}_{\epsilon})=E_{\epsilon}(u_{\epsilon})-\frac{1}{2}\|h_{\epsilon}\|_{L^2}^2+O(|\log\epsilon|^{1/2})
\end{equation}
as $\epsilon\to 0$. Letting $V_{\epsilon}$ denote the generalized $(n-2)$-varifold given by
\begin{equation}\label{tildevdef}
V_{\epsilon}:=\frac{1}{|\log\epsilon|}T_{\epsilon}(\tilde{u}_{\epsilon}),
\end{equation}
it follows from (\ref{etildmassest}) that the $V_{\epsilon}$ have uniformly bounded mass as $\epsilon\to 0$, so we can extract a subsequence $\epsilon_j\to 0$ such that $V_{\epsilon_j}$ converges (weakly in $C^0(A_{n-2}(M))^*$) to a generalized $(n-2)$-varifold $V$. 

\hspace{6mm} For any $C^1$ vector field $X$ on $M$, it then follows from (\ref{tildeiv}) that
\begin{eqnarray*}
\delta V(X)&=&\lim_{\epsilon_j\to 0}\int_{A_{n-2}(M)}\langle S,\nabla X\rangle dV_{\epsilon_j}(S)\\
&=&\lim_{\epsilon_j\to 0}\frac{1}{|\log\epsilon_j|}\int_M\langle T_{\epsilon_j}(\tilde{u}_{\epsilon_j}),\nabla X\rangle\\
&\leq &C\lim_{\epsilon_j\to 0}|\log\epsilon_j|^{-1/2}\|\nabla X\|_{\infty}\\
&=&0,
\end{eqnarray*}
so $V$ is indeed stationary. Thus, writing
$$\nu_{\epsilon}:=\frac{e_{\epsilon}(\tilde{u}_{\epsilon})}{|\log\epsilon|}dv_g$$
and 
$$\nu:=\|V\|=\lim_{\epsilon_j\to 0}\nu_{\epsilon_j},$$
once we exhibit some $\eta>0$ such that
\begin{equation}\label{densest}
\Theta^{n-2}(\nu,x)=\lim_{r\to 0}\frac{\nu(B_r(x))}{\omega_{n-2}r^{n-2}}\geq \eta\text{ for }x\in spt(\nu),
\end{equation}
we can apply Proposition \ref{asprop} to conclude that $V$ is a stationary, rectifiable $(n-2)$-varifold.

\hspace{6mm} We recall now one of the key tools in the study of energy concentration for Ginzburg-Landau solutions: the $\eta$-ellipticity (or $\eta$-compactness) theorem of \cite{LR} and \cite{BBO}:

\begin{prop} (cf. \cite{LR},\cite{BBO})\label{etaellip} There exist positive constants $\eta(M),r_0(M),$ and $\epsilon_0(M)$ such that if $u_{\epsilon}$ solves (\ref{gleqns}) on $M$ for some $\epsilon\in (0,\epsilon_0)$, then at any point $p\in M$ for which $|u_{\epsilon}(p)|\leq \frac{1}{2}$, we have
\begin{equation}
r^{2-n}\int_{B_r(p)}e_{\epsilon}(u_{\epsilon})\geq \eta \log(r/\epsilon).
\end{equation}
for every $r\in (\epsilon,r_0)$.
\end{prop}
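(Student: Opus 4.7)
The plan is to follow Lin--Rivi\`ere \cite{LR} and Bethuel--Brezis--Orlandi \cite{BBO}: combine a Pohozaev-type monotonicity formula with a microscopic clearing-out estimate, then propagate to all intermediate scales via an $\eta$-compactness argument.

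First, I would derive the monotonicity formula. Since $u_\epsilon$ solves (\ref{gleqns}), the stress-energy tensor $T_\epsilon(u_\epsilon) = e_\epsilon(u_\epsilon)\,\mathrm{Id} - du_\epsilon^*\,du_\epsilon$ is divergence-free, and pairing this identity with the radial vector field in geodesic normal coordinates around $p$ yields, for $\rho\in(0,r_0(M))$,
\[
\frac{d}{d\rho}\!\left[e^{C\rho}\,\rho^{2-n}\!\!\int_{B_\rho(p)}\!\! e_\epsilon(u_\epsilon)\right]
\;\geq\; 2\rho^{1-n}\!\!\int_{B_\rho(p)}\!\!\frac{W(u_\epsilon)}{\epsilon^2},
\]
the exponential factor absorbing the curvature error from the deviation of the metric from Euclidean. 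The same Pohozaev computation also renders $e^{C\rho}\rho^{2-n}\!\int_{B_\rho}W(u_\epsilon)/\epsilon^2$ monotone nondecreasing in $\rho$.

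Second, I would establish a microscopic lower bound on the potential at $p$. By the maximum principle $|u_\epsilon|\leq 1$, and the rescaled map $v(y) := u_\epsilon(\exp_p(\epsilon y))$ solves $\Delta v = -(1-|v|^2)v + O(\epsilon)$ on $B_1(0)\subset T_pM$, so standard interior elliptic regularity gives a uniform $C^1$ bound on $v$. Hence $|u_\epsilon(p)|\leq\tfrac12$ forces $|v|\leq\tfrac34$ on some $B_{c_0}(0)$, yielding $\int_{B_{c_0\epsilon}(p)} W(u_\epsilon)/\epsilon^2 \geq c_1\epsilon^{n-2}$. I would then upgrade this to every intermediate scale, showing $\rho^{2-n}\!\int_{B_\rho(p)} W(u_\epsilon)/\epsilon^2 \geq c_1/2$ for all $\rho\in(c_0\epsilon,r_0)$ by a contradiction/compactness argument in the spirit of \cite{LR},\cite{BBO}: if the density were to drop below threshold at some scale $\rho\gg\epsilon$, rescaling $u_\epsilon$ by $\rho$ and invoking the $\epsilon$-scale equation together with the monotonicity formula would produce a limit profile violating the microscopic clearing-out. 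Substituting into the monotonicity inequality and integrating then gives
\[
r^{2-n}\!\!\int_{B_r(p)}\!\! e_\epsilon(u_\epsilon)
\;\geq\; \int_{c_0\epsilon}^r 2\rho^{1-n}\!\!\int_{B_\rho(p)}\!\!\frac{W(u_\epsilon)}{\epsilon^2}\,d\rho
\;\geq\; 2c_1\log\!\bigl(r/(c_0\epsilon)\bigr),
\]
and the claimed estimate follows after choosing $\eta = c_1$ and absorbing the constant $\log c_0$.

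The main obstacle is the propagation of the microscopic potential lower bound to scales well above $\epsilon$: at such scales the rescaled equation is no longer of Ginzburg--Landau type with order-one parameter, so the microscopic clearing-out cannot be directly iterated. The compactness argument required instead exploits the $\epsilon$-scale equation together with the monotonicity formula in a genuinely two-scale way, and delivering the precise $\log(r/\epsilon)$ lower bound (rather than a merely scale-uniform constant) is the technical heart of the $\eta$-ellipticity theorem in \cite{LR},\cite{BBO}.
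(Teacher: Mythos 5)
Your sketch reverses the logic of the paper's proof and hides the entire technical content in a single hand-waved step. You propose to (i) get a potential lower bound $\int_{B_{c_0\epsilon}(p)}W(u_\epsilon)/\epsilon^2\gtrsim \epsilon^{n-2}$ from $|u_\epsilon(p)|\leq\tfrac12$, (ii) propagate it upward to $\rho^{2-n}\int_{B_\rho}W(u_\epsilon)/\epsilon^2\geq c_1/2$ for all $\rho\in(c_0\epsilon,r_0)$, and (iii) integrate the monotonicity inequality. Step (iii) is fine, and step (i) is the last line of the paper's proof. But step (ii) is essentially the whole theorem, and neither of your two justifications for it works. You first assert that $e^{C\rho}\rho^{2-n}\int_{B_\rho}W(u_\epsilon)/\epsilon^2$ is ``rendered monotone'' by the Pohozaev computation; the Pohozaev identity controls $n\int_{B_\rho}W/\epsilon^2+(n-2)\int_{B_\rho}|du|^2/2$ by boundary terms, but gives no monotonicity for the potential alone (and if such monotonicity held, your compactness argument would be unnecessary, since the microscopic bound would propagate for free). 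You then invoke ``a contradiction/compactness argument'': rescale by $\rho\gg\epsilon$ and extract a limit profile. But the rescaled map solves a Ginzburg--Landau equation with parameter $\epsilon/\rho\to 0$, and its limits are $S^1$-valued harmonic maps with \emph{identically vanishing} potential; there is no contradiction with the microscopic clearing-out, which only sees the scale-$\epsilon$ behavior at the fixed center. The circularity you are trying to avoid is still there.

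The paper's argument (following \cite{BBO}) proceeds differently, and the difference matters. It is a contradiction argument in the other direction: assume $\eta:=|\log\epsilon|^{-1}\int_{B_1(p)}e_\epsilon(u)$ is small and deduce $|u(p)|>\tfrac12$. The key tool is the ``$\delta$-energy decay'' estimate (Lemma~\ref{deltaenlem} / Corollary~\ref{deltaencor}), an elliptic-regularity result proved by decomposing $\zeta\alpha$ (a cutoff of the modified current $\alpha=\phi(|u|^2)ju$) in Hodge form, controlling its coexact part via the pointwise bound $|d\alpha|\lesssim W(u)/\epsilon^2$, the Green's function estimate (\ref{globalgreenest}), and the monotonicity consequence (\ref{monogreen}), and controlling the exact and residual parts via interior estimates for harmonic forms. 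This yields
$$\int_{B_{\delta r}(p)}e_\epsilon(u)\leq C\Bigl(\delta^n+r^{2-n}\!\!\int_{B_r(p)}\frac{W(u)}{\epsilon^2}\Bigr)\int_{B_r(p)}e_\epsilon(u)+C\int_{B_r(p)}\frac{W(u)}{\epsilon^2}.$$
A pigeonhole over dyadic scales between $\epsilon^{1/2}$ and $\delta$ then produces a good radius $r_0$ at which both the increment $F_\epsilon(u,p,r_0)-F_\epsilon(u,p,\delta r_0)$ and the potential term $r_0^{2-n}\int_{B_{r_0}}W(u)/\epsilon^2$ are $O(\eta|\log\delta|)$; feeding these into the $\delta$-energy decay with $\delta=\eta^{1/(2(n-2))}$ forces $F_\epsilon(u,p,r_0)\leq\eta^{1/2}$, and monotonicity then pushes this down to $\epsilon^{-n}\int_{B_\epsilon(p)}W(u)\leq\eta^{1/2}$, contradicting $|u(p)|\leq\tfrac12$. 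None of this Hodge-theoretic machinery, nor the dyadic pigeonhole, appears in your proposal, so the proposal cannot be patched into a proof without importing an argument of comparable difficulty.
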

In the interest of completeness, we've included an appendix to this paper in which we translate the arguments of \cite{BBO} to the setting of compact manifolds, to obtain the precise version of $\eta$-ellipticity stated above.

\hspace{6mm} Now, since 
$$e_{\epsilon}(\tilde{u}_{\epsilon})=e_{\epsilon}(u_{\epsilon})+\frac{1}{2}|u_{\epsilon}|^2|j\phi_{\epsilon}|^2-\langle ju_{\epsilon},j\phi_{\epsilon}\rangle,$$
on any geodesic ball $B_r(p)$, we see that
\begin{eqnarray*}
\int_{B_r(p)}e_{\epsilon}(\tilde{u}_{\epsilon})&\geq &\int_{B_r(p)}e_{\epsilon}(u_{\epsilon})+\int_{B_r(p)}\langle ju_{\epsilon},j\phi_{\epsilon}\rangle\\
&\geq &\int_{B_r(p)}e_{\epsilon}(u_{\epsilon})-\|j\phi_{\epsilon}\|_{\infty}\int_{B_r(p)}|ju_{\epsilon}|\\
&\geq &\int_{B_r(p)}e_{\epsilon}(u_{\epsilon})-C|\log\epsilon|^{1/2}\int_{B_r(p)}|ju_{\epsilon}|,
\end{eqnarray*}
Next, we note that, by the monotonicity formula for Ginzburg-Landau solutions (see formula (\ref{mono1}) in the appendix) and the energy bound (\ref{obvlogbd}), we have
$$\int_{B_r(p)}|du_{\epsilon}|^2\leq C r^{n-2}E_{\epsilon}(u_{\epsilon})\leq Cr^{n-2}|\log\epsilon|,$$
and as a consequence, 
\begin{eqnarray*}
\int_{B_r(p)}|ju_{\epsilon}|&\leq & |B_r(p)|^{1/2}\left(\int_{B_r(p)}|du_{\epsilon}|^2\right)^{1/2}\\
&\leq &Cr^{n/2}\cdot r^{\frac{n-2}{2}}|\log\epsilon|^{1/2}\\
&=&Cr^{n-1}|\log\epsilon|^{1/2}.
\end{eqnarray*}
Plugging this into the lower bound for $\int_{B_r(p)}e_{\epsilon}(\tilde{u}_{\epsilon})$ above, we find that
\begin{equation}\label{etildedens1}
r^{2-n}\int_{B_r(p)}e_{\epsilon}(\tilde{u}_{\epsilon})\geq r^{2-n}\int_{B_r(p)}e_{\epsilon}(u_{\epsilon})-Cr|\log\epsilon|
\end{equation}
for all $r<inj(M)$.

\hspace{6mm} In particular, if $p$ is a point at which $|u_{\epsilon}(p)|\leq \frac{1}{2}$, then for $\epsilon<\epsilon_0$ and $r\leq r_0$, we can combine (\ref{etildedens1}) with Proposition \ref{etaellip} to conclude that 
\begin{equation}\label{etildetaest}
r^{2-n}\int_{B_r(p)}e_{\epsilon}(\tilde{u}_{\epsilon})\geq \eta |\log (r/\epsilon)|-Cr|\log\epsilon|.
\end{equation}
Setting
$$r_1:=\min\{r_0,\frac{\eta}{4C}\}\text{ and }\eta_1:=\frac{\eta}{4},$$
we obtain from (\ref{etildetaest}) the following:

\begin{lem}\label{tildeetaellip} There exist positive constants $\epsilon_0(M)$, $r_1(M)$, $\eta_1(M)$ such that if $u_{\epsilon}$ solves (\ref{gleqns}) on $M$ for $\epsilon\in (0,\epsilon_0)$, if $|u_{\epsilon}(p)|\leq \frac{1}{2}$, then
\begin{equation}
r^{2-n}\int_{B_r(p)}e_{\epsilon}(\tilde{u}_{\epsilon})\geq \eta_1|\log\epsilon|
\end{equation}
for every $r\in (\epsilon^{1/2},r_1).$
\end{lem}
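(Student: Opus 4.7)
The plan is to deduce the lemma directly from the estimate (\ref{etildetaest}) established in the paragraph immediately preceding the statement, by choosing the radius $r$ in a range where the error term $Cr|\log\epsilon|$ is dominated by the $\eta$-ellipticity contribution $\eta\log(r/\epsilon)$. First I would restrict to $r\geq \epsilon^{1/2}$, which forces $\log(r/\epsilon)\geq \frac{1}{2}|\log\epsilon|$ and hence ensures the leading term is at least $\frac{\eta}{2}|\log\epsilon|$. Next I would further constrain $r\leq \eta/(4C)$, so that the error satisfies $Cr|\log\epsilon|\leq \frac{\eta}{4}|\log\epsilon|$.

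Setting $r_1:=\min\{r_0,\eta/(4C)\}$ and $\eta_1:=\eta/4$, the estimate (\ref{etildetaest}) then yields
\[
r^{2-n}\int_{B_r(p)} e_\epsilon(\tilde u_\epsilon) \;\geq\; \tfrac{\eta}{2}|\log\epsilon| - \tfrac{\eta}{4}|\log\epsilon| \;=\; \eta_1 |\log\epsilon|
\]
for every $r\in(\epsilon^{1/2},r_1)$, which is the conclusion of Lemma \ref{tildeetaellip}. There is no real obstacle here: the substantive work---deriving the comparison (\ref{etildedens1}) between $e_\epsilon(\tilde u_\epsilon)$ and $e_\epsilon(u_\epsilon)$ on balls by combining the harmonicity bound $\|j\phi_\epsilon\|_\infty\leq C|\log\epsilon|^{1/2}$ (from (\ref{jphibds})) with the Ginzburg--Landau monotonicity formula to control $\int_{B_r(p)}|ju_\epsilon|$, and then invoking the $\eta$-ellipticity of Proposition \ref{etaellip} on the $u_\epsilon$ side---was already carried out immediately before the lemma statement. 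The only genuinely new ingredient in the lemma itself is the truncation of $r$ from below by $\epsilon^{1/2}$, which is exactly the price one pays to convert $\log(r/\epsilon)$ into a fixed positive multiple of $|\log\epsilon|$.
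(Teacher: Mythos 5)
Your proof is correct and matches the paper's argument exactly: you take the estimate (\ref{etildetaest}), use $r\geq\epsilon^{1/2}$ to lower-bound $\log(r/\epsilon)$ by $\tfrac{1}{2}|\log\epsilon|$, use $r\leq\eta/(4C)$ to absorb the error term, and land on precisely the constants $r_1=\min\{r_0,\eta/(4C)\}$ and $\eta_1=\eta/4$ that the paper names. The paper leaves the arithmetic implicit, but its choice of constants makes clear this is the intended route.
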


\hspace{6mm} Now, with $\eta_1$ as in Lemma \ref{tildeetaellip}, consider a point $x\in M$ at which
\begin{equation}
2^{n-2}\omega_{n-2}\Theta^{n-2}(\nu,x)=\lim_{r\to 0}(r/2)^{2-n}\nu(B_r(x))<\eta_1.
\end{equation}
We can then choose $r\in (0,r_1)$ such that
$$r^{2-n}\nu(B_{2r}(x))<\eta_1,$$
and therefore 
\begin{equation}
r^{2-n}\nu_{\epsilon_j}(B_{2r}(x))<\eta_1
\end{equation}
for $\epsilon_j$ sufficiently small. In particular, for every $p \in B_r(x)$, it follows that
$$r^{2-n}\nu_{\epsilon_j}(B_r(p))<\eta_1,$$
and thus, by Lemma \ref{tildeetaellip},
\begin{equation}\label{ulowbd}
|u_{\epsilon_j}|>\frac{1}{2}\text{ on }B_r(x).
\end{equation}

\hspace{6mm} The objective now is to use (\ref{ulowbd}) to show that $\nu(B_{r/2}(x))=0$, from which we'll deduce that 
\begin{equation}
\Theta^{n-2}(\nu,\cdot)\geq \frac{\eta_1}{2^{n-2}\omega_{n-2}}\text{ on }spt(\nu).
\end{equation}
For solutions $u_{\epsilon}$ satisfying (\ref{obvlogbd}), we observe that the estimates of \cite{BOSe} give us a bound of the form
$$\int_M|d|u||^2+\frac{W(u)}{\epsilon^2}\leq C,$$
and recall from \cite{Stern} the pointwise gradient estimate
$$|du_{\epsilon}|^2\leq \frac{C}{\epsilon^2}(1-|u_{\epsilon}|^2).$$
Together, these imply
\begin{equation}\label{boscons}
\int_M[e_{\epsilon}(u_{\epsilon})-\frac{1}{2}|ju_{\epsilon}|^2]=\int_M[\frac{1}{2}(1-|u_{\epsilon}|^2)|du_{\epsilon}|^2+\frac{1}{2}|d|u_{\epsilon}||^2+\frac{W(u_{\epsilon})}{\epsilon^2}\leq C;
\end{equation}
and since 
$$\nu_{\epsilon}:=\frac{e_{\epsilon}(\tilde{u}_{\epsilon})dv_g}{|\log\epsilon|}=\frac{[e_{\epsilon}(u_{\epsilon})+\frac{1}{2}|u_{\epsilon}|^2|j\phi_{\epsilon}|^2-\langle ju_{\epsilon},j\phi_{\epsilon}\rangle]dv_g}{|\log\epsilon|},$$
it follows that
\begin{eqnarray*}
\nu&=&\lim_{\epsilon_j\to 0}\frac{[\frac{1}{2}|ju_{\epsilon_j}|^2+\frac{1}{2}|u_{\epsilon_j}|^2|j\phi_{\epsilon_j}|^2-\langle ju_{\epsilon_j},j\phi_{\epsilon_j}\rangle]dv_g}{|\log\epsilon_j|}\\
&=&\lim_{\epsilon_j\to 0}\frac{\frac{1}{2}[|ju_{\epsilon_j}-j\phi_{\epsilon_j}|^2-(1-|u_{\epsilon_j}|^2)|j\phi_{\epsilon_j}|^2]dv_g}{|\log\epsilon_j|}\\
&=&\lim_{\epsilon_j\to 0}\frac{1}{2}\frac{|ju_{\epsilon_j}-j\phi_{\epsilon_j}|^2}{|\log\epsilon_j|}dv_g.
\end{eqnarray*}
(Where we've used once again the estimate $\int_M(1-|u_{\epsilon}|^2)|j\phi_{\epsilon}|^2\leq C\epsilon |\log\epsilon|^{3/2}$.)

\hspace{6mm} Next, incorporating the $\phi_{\epsilon}$ term into the arguments of \cite{BBO}, we consider the one-forms
$$\alpha_{\epsilon}:=\psi(|u_{\epsilon}|^2)ju_{\epsilon}-j\phi_{\epsilon},$$
where $\psi(t)$ is some fixed nonnegative function satisfying
$$\psi(t)=\frac{1}{t}\text{ for }t\geq \frac{1}{4},\text{ }\psi(t)=1\text{ for }t\leq \frac{1}{8},$$
so that
\begin{equation}\label{sec3dalpha}
d\alpha_{\epsilon}=d[j(u_{\epsilon}/|u_{\epsilon}|)]=0\text{ on }\{|u_{\epsilon}|\geq \frac{1}{2}\}.
\end{equation}
It also follows from the choice of $\psi$ that
$$|\psi(t)-1|\leq C(1-t)$$
for all $t \in [0,1]$, so that
\begin{equation}\label{l2alphadiff}
\int_M|\alpha_{\epsilon}-(ju_{\epsilon}-j\phi_{\epsilon})|^2\leq C\int (1-|u_{\epsilon}|^2)|ju_{\epsilon}|^2\leq C
\end{equation}
(where we've employed (\ref{boscons}) in the final bound). 

\hspace{6mm} The uniform $L^2$ bound (\ref{l2alphadiff}) on the difference $\alpha_{\epsilon}-(ju_{\epsilon}-j\phi_{\epsilon})$, together with our previous computations for $\nu$, give us the new characterization
\begin{equation}\label{nuchar}
\lim_{\epsilon_j\to 0}\frac{1}{2}\frac{|\alpha_{\epsilon_j}|^2}{|\log\epsilon_j|}dv_g
\end{equation}
of the limiting measure $\nu$. We note also that (\ref{l2alphadiff}) gives us $L^2$ bounds on each component in the Hodge decomposition of $\alpha_{\epsilon}-(ju_{\epsilon}-j\phi_{\epsilon})$; hence, letting
$$\alpha_{\epsilon}=d\varphi_{\epsilon}+d^*\zeta_{\epsilon}+H(\alpha_{\epsilon})$$
and noting that
$$ju_{\epsilon}-j\phi_{\epsilon}=d^*\xi_{\epsilon}+h_{\epsilon}',$$
we deduce that
\begin{equation}
\|d\varphi_{\epsilon}\|_{L^2}\leq C,
\end{equation}
\begin{equation}
\|H(\alpha_{\epsilon})\|_{L^2}\leq C+\|h_{\epsilon'}\|_{L^2}\leq C',
\end{equation}
and
\begin{equation}\label{coexactdiffs}
\|d^*\zeta_{\epsilon}-d^*\xi_{\epsilon}\|_{L^2}\leq C.
\end{equation}
In particular, since $\|d^*\zeta_{\epsilon}\|_{L^2}^2$ is the only unbounded part of $\|\alpha_{\epsilon}\|_{L^2}^2$, it then follows from (\ref{nuchar}) that
\begin{equation}\label{nuchar2}
\nu=\lim_{\epsilon_j\to 0}\frac{1}{2}\frac{|d^*\zeta_{\epsilon_j}|^2}{|\log\epsilon_j|}dv_g.
\end{equation}

\hspace{6mm} Now, since 
$$dd^*\zeta_{\epsilon}=d\alpha_{\epsilon},$$
it follows from (\ref{sec3dalpha}) that $d^*\zeta_{\epsilon}$ is harmonic on $\{|u_{\epsilon}|\geq \frac{1}{2}\}$. In particular, by (\ref{ulowbd}), $d^*\zeta_{\epsilon}$ must be harmonic on the ball $B_r(x)$, giving us an estimate of the form\footnote{See, e.g., (\ref{localsupest}) in the appendix.}
\begin{equation}\label{zetaharmcons}
\|d^*\zeta_{\epsilon}\|_{L^2(B_{r/2}(x))}\leq C(n,r)\|d^*\zeta_{\epsilon}\|_{L^1(B_r(x))}.
\end{equation}
Next, we use (\ref{coexactdiffs}) to estimate
$$\|d^*\zeta_{\epsilon}-d^*\xi_{\epsilon}\|_{L^1}\leq C\|d^*\zeta_{\epsilon}-d^*\xi_{\epsilon}\|_{L^2}\leq C;$$
and since we saw in (\ref{dstarxil1est}) that $\|d^*\xi_{\epsilon}\|_{L^1}\leq C,$ it follows that
$$\|d^*\zeta_{\epsilon}\|_{L^1}\leq C'.$$
Plugging this bound into (\ref{zetaharmcons}), we appeal finally to (\ref{nuchar2}) to conclude that
\begin{equation}
\nu(B_{r/2}(x))=\lim_{\epsilon_j\to 0}\frac{1}{2}\frac{\|d^*\zeta_{\epsilon_j}\|_{L^2(B_{r/2}(x))}^2}{|\log\epsilon_j|}\leq \lim_{\epsilon_j\to 0} \frac{C}{|\log\epsilon_j|}=0.
\end{equation}

\hspace{6mm} We've now shown that, at any point $x$ in the support of the stationary generalized $(n-2)$-varifold $V$, the density $\Theta^{n-2}$ has the lower bound
$$\Theta^{n-2}(\|V\|,x)\geq \frac{\eta_1}{\omega_{n-2}2^{n-2}}>0.$$
Thus, we can apply Proposition \ref{asprop} to conclude that $V$ is indeed a stationary, rectifiable $(n-2)$-varifold. The formula (\ref{vortmass}) for the mass follows immediately from (\ref{etildmassest}).
\end{proof}

\begin{remark} The alternative characterization
$$\|V\|=\lim_{\epsilon_j\to 0}\frac{1}{2}\frac{|d^*\xi_{\epsilon_j}|^2}{|\log\epsilon_j|}dv_g$$
of $V$ is an immediate consequence of (\ref{nuchar2}) and (\ref{coexactdiffs}).
\end{remark}

\section{Energy Concentration for the Min-Max Solutions}

\hspace{6mm} We recall now the special solutions $u_{\epsilon}$ of (\ref{gleqns}) constructed in \cite{Stern} by applying min-max methods for $E_{\epsilon}$ over the collection $\Gamma(M)$ of families 
$$F\in C^0(D^2,W^{1,2}(M,\mathbb{C}))\text{\hspace{3mm} }D^2\ni y\mapsto F_y\in W^{1,2}(M,\mathbb{C})$$
satisfying
$$F_y\equiv y\text{ for all }y\in \partial D^2.$$
By construction, each $u_{\epsilon}$ occurs as the limit in $W^{1,2}$ of a min-max sequence $v_{\epsilon}^j\in W^{1,2}(M,\mathbb{C})$ 
of the form
\begin{equation}\label{vmmdef1}
v_{\epsilon}^j=F^j_{y_j}
\end{equation}
for some families $F^j\in \Gamma(M)$ and $y_j\in D^2$ with 
\begin{equation}\label{mmdef2}
E_{\epsilon}(F^j_{y_j})=\max_{y\in D^2}E_{\epsilon}(F^j_y)\to c_{\epsilon}(M):=\inf_{F\in \Gamma(M)}\max_{y\in D^2}E_{\epsilon}(F_y)\text{ as }j\to\infty.
\end{equation}
Recall from \cite{Stern} that we can choose these families $F^j$ to satisfy the additional requirement that 
\begin{equation}\label{valuesindisk}
F^j_y\in W^{1,2}(M,D^2)\text{--i.e., }\|F_y^j\|_{\infty}\leq 1
\end{equation}
for every $y\in D^2$. 

\hspace{6mm} Consider the class of maps $\mathcal{C}_{\epsilon}\subset W^{1,2}(M,D^2)$ given by
\begin{eqnarray*}
\mathcal{C}_{\epsilon}:=\{v \mid & v=w_1\text{ for some path }t\mapsto w_t\text{ in }C^0([0,1],W^{1,2}(M,D^2))\\
&\text{ such that }w_0\equiv 1\in \mathbb{C}\text{ and }E_{\epsilon}(v)=\max_{t\in [0,1]}E_{\epsilon}(w_t)\}.
\end{eqnarray*}
It follows from (\ref{vmmdef1})-(\ref{valuesindisk})--taking, for instance, $w_t=F_{(1-t)\cdot 1+t\cdot y_j}$--that each $v_{\epsilon}^j$ belongs to $\mathcal{C}_{\epsilon}$. The desired energy estimate
\begin{equation}\label{mmenergconc}
\liminf_{\epsilon\to 0}\frac{E_{\epsilon}(u_{\epsilon})-\frac{1}{2}\|h_{\epsilon}\|_{L^2}^2}{|\log\epsilon|}\to 0
\end{equation}
will be a straightforward consequence of the following lemma:

\begin{lem}\label{cebdslem} There exist constants $C(M)<\infty,$ $c(M)>0$, and $\epsilon_0(M)>0$ such that if $\epsilon \in (0,\epsilon_0)$, then for every $v\in \mathcal{C}_{\epsilon}$ satisfying $E_{\epsilon}(v)\leq \epsilon^{-1/2}$, either\footnote{Recall the box-type norm $|\cdot|_b$ defined by (\ref{normbdef}).}
\begin{equation}\label{hvcase1}
|h_v|_b\leq \pi
\end{equation}
or 
\begin{equation}\label{hvcase2}
E_{\epsilon}(v)-\frac{1}{2}(1-C|\log\epsilon|^{-1})\|h_v\|_{L^2}^2\geq c|\log\epsilon|.
\end{equation}
\end{lem}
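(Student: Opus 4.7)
The first case, $|h_v|_b\leq \pi$, is vacuous; I focus on $|h_v|_b>\pi$ and aim to establish \eqref{hvcase2}. Let $\lambda_v\in\Lambda$ be a nearest lattice point to $h_v$ in the box norm, so that $|h_v-\lambda_v|_b\leq\pi$. Because every nonzero element of $\Lambda$ has box norm at least $2\pi$ while $\dist_b(h_v,\Lambda)\leq\pi<|h_v|_b$, the hypothesis forces $\lambda_v\neq 0$. The plan is to locate a single time $t_\ast\in[0,1]$ along the path $w_t$ at which, after multiplication by a harmonic $\phi:M\to S^1$ with $j\phi=\lambda_v$, Proposition~\ref{threshprop} delivers a strong lower bound on the renormalized energy $E_\epsilon-\tfrac12\|h\|^2$; the max-energy property will then transport this bound to $v$, and a Young-type absorption will yield \eqref{hvcase2}.

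Continuity of $t\mapsto h_{w_t}$ into the finite-dimensional space $\mathcal{H}^1(M)$ (Remark~\ref{hrk}) lets me take $t_\ast:=\sup\{t\in[0,1]:h_{w_t}\notin\mathrm{Cell}(\lambda_v)\}$, where $\mathrm{Cell}(\lambda_v)$ denotes the closed Voronoi cell of $\lambda_v$ in the box norm. Since $h_{w_0}=0\notin\mathrm{Cell}(\lambda_v)$ while $h_{w_1}=h_v\in\mathrm{Cell}(\lambda_v)$, we have $h_{w_{t_\ast}}\in\partial\mathrm{Cell}(\lambda_v)$; in the box norm this forces $|h_{w_{t_\ast}}-\lambda_v|_b=\pi$ and $\dist_b(h_{w_{t_\ast}},\Lambda)=\pi$. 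Setting $\tilde w:=\phi^{-1}\cdot w_{t_\ast}$, identities \eqref{jtildeu} and \eqref{enertildeu} together with $L^1$-control on the Hodge projection of $(1-|w_{t_\ast}|^2)j\phi$ (derived from $\|j\phi\|_\infty\leq C\|j\phi\|_{L^2}\leq C(\|h_v\|_{L^2}+1)$ and the hypothesis $E_\epsilon(v)\leq\epsilon^{-1/2}$) yield the approximate gauge invariance
\[
E_\epsilon(\tilde w)-\tfrac12\|h_{\tilde w}\|_{L^2}^2 = E_\epsilon(w_{t_\ast})-\tfrac12\|h_{w_{t_\ast}}\|_{L^2}^2+o(1),
\]
together with $h_{\tilde w}=h_{w_{t_\ast}}-\lambda_v+o(1)$ in $\mathcal{H}^1(M)$, so that $\dist_b(h_{\tilde w},\Lambda)\geq \pi-o(1)$ and $\|h_{\tilde w}\|_{L^2}^2\leq C_1(M)$. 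Proposition~\ref{threshprop} applied to $\tilde w$ then delivers
\[
E_\epsilon(\tilde w)-\tfrac12\|h_{\tilde w}\|^2\geq c_0(M)|\log\epsilon|
\]
for $\epsilon$ small, the $-\tfrac{\epsilon^\alpha}{2}\|h_{\tilde w}\|^2$ correction being absorbed using the uniform $L^2$-bound on $h_{\tilde w}$; the displayed gauge invariance then gives $E_\epsilon(w_{t_\ast})-\tfrac12\|h_{w_{t_\ast}}\|^2\geq c_0|\log\epsilon|/2$.

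The max-energy property $E_\epsilon(v)\geq E_\epsilon(w_{t_\ast})$ now yields
\[
E_\epsilon(v)-\tfrac12\|h_v\|^2 \geq \tfrac{c_0}{2}|\log\epsilon|+\tfrac12\bigl(\|h_{w_{t_\ast}}\|^2-\|h_v\|^2\bigr).
\]
Writing $h_v=\lambda_v+(h_v-\lambda_v)$ and $h_{w_{t_\ast}}=\lambda_v+(h_{w_{t_\ast}}-\lambda_v)$, with both perturbations of box norm $\leq\pi$ and hence of $L^2$-norm $\leq C_2(M)$, a direct Cauchy--Schwarz expansion gives $\|h_{w_{t_\ast}}\|^2-\|h_v\|^2 \geq -a\|h_v\|-b$ for constants $a,b=a,b(M)$. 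The AM-GM inequality
\[
\tfrac{C}{2|\log\epsilon|}\|h_v\|^2-\tfrac{a}{2}\|h_v\|\geq -\tfrac{a^2|\log\epsilon|}{8C}
\]
then absorbs the linear defect $-\tfrac{a}{2}\|h_v\|$ into the compensating $\tfrac{C}{2|\log\epsilon|}\|h_v\|^2$ term appearing in \eqref{hvcase2}: choosing $C(M):=a^2/c_0$ leaves a positive coefficient $c_0/4$ in front of $|\log\epsilon|$, and for $\epsilon$ small enough this delivers \eqref{hvcase2} with $c(M):=c_0/8$. The principal technical obstacle is the error analysis for the gauge transformation $w_{t_\ast}\mapsto\phi^{-1}\cdot w_{t_\ast}$: verifying that the Hodge-projection and potential-term remainders are indeed $o(1)$ (rather than competing with the main $|\log\epsilon|$ term) relies crucially on the quantitative hypothesis $E_\epsilon(v)\leq\epsilon^{-1/2}$, which bounds $\|j\phi\|_\infty$ by $C\epsilon^{-1/4}$ and thus controls every error term by a positive power of $\epsilon$.
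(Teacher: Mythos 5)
Your overall strategy is the same as the paper's: find a crossing time along the path at which the harmonic part sits at box-distance exactly $\pi$ from $\Lambda$, apply Proposition~\ref{threshprop} there to produce a $c_0|\log\epsilon|$-sized lower bound, transfer it to $v$ using the max-energy property defining $\mathcal{C}_\epsilon$, and then absorb the $L^2$-mismatch $\|h_{w_{t_\ast}}\|^2-\|h_v\|^2$ by Young's inequality. The choice of crossing time (Voronoi-boundary exit rather than an intermediate-value argument with a fixed target $j\phi$) is cosmetically different but equivalent. The final AM-GM absorption is exactly the paper's step, and your choice of $C(M)$ and $c(M)$ is fine.

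The genuine difference is your insertion of an explicit gauge transformation $\tilde w := \phi^{-1}\cdot w_{t_\ast}$ before applying Proposition~\ref{threshprop}, together with the accompanying error analysis of the Hodge projection of $(1-|w_{t_\ast}|^2)j\phi$. This step is redundant: the statement of Proposition~\ref{threshprop} is already phrased in terms of $\dist_b(h_u,\Lambda)$ and $\|h_u\|_{L^2}^2$, and its proof performs precisely the multiplication by $\phi^{-1}$ internally (that is how the $\frac12\|h_u\|^2$ term arrives in the lower bound). The paper simply applies Proposition~\ref{threshprop} directly to $w_{t_0}$, which has $E_\epsilon(w_{t_0})\le E_\epsilon(v)\le\epsilon^{-1/2}$ and $|w_{t_0}|\le 1$ by construction, and obtains
\[
E_\epsilon(w_{t_0})\ge \tfrac12(1-\epsilon^\alpha)\|h_{w_{t_0}}\|_{L^2}^2+C^{-1}\pi|\log\epsilon|-\epsilon^\alpha
\]
with no gauge-transformation error terms to estimate. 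Your detour costs you more than just effort: you must verify the hypothesis $E_\epsilon(\tilde w)\le\epsilon^{-1/2}$ of Proposition~\ref{threshprop} for $\tilde w$, and because identity \eqref{enertildeu} only gives $E_\epsilon(\tilde w)\le E_\epsilon(w_{t_\ast})+\tfrac12\|h_{w_{t_\ast}}-j\phi\|_{L^2}^2\le\epsilon^{-1/2}+O(1)$, this is not automatic; you would need to either re-prove Proposition~\ref{threshprop} with a slightly relaxed hypothesis (say $E_\epsilon(u)\le 2\epsilon^{-1/2}$, which works with the same proof) or shrink $\epsilon_0$, and you don't address this. Removing the gauge transformation eliminates the issue and shortens the argument.
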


\hspace{6mm} If $v_{\epsilon}^j\in\mathcal{C}_{\epsilon}$ is a min-max sequence approximating $u_{\epsilon}$, then evidently
$$E_{\epsilon}(v_{\epsilon}^j)\leq E_{\epsilon}(u_{\epsilon})+1\leq C|\log\epsilon|<\epsilon^{-1/2}$$
provided $\epsilon$ is sufficiently small and $j$ sufficiently large. Passing to a further subsequence if necessary, it then follows from the lemma that either 
\begin{equation}\label{mmcase1}
\|h_{v_{\epsilon}^j}\|_{L^2}\leq C(M)\text{ for every j=1,\ldots}
\end{equation}
or
\begin{equation}\label{mmcase2}
E_{\epsilon}(v_{\epsilon}^j)\geq \frac{1}{2}(1-C|\log\epsilon|^{-1})\|h_{v_{\epsilon}^j}\|_{L^2}^2+c|\log\epsilon|\text{ for every j}.
\end{equation}
In the first case, taking the limit of (\ref{mmcase1}) as $j\to \infty$, we deduce that $$\|h_{u_{\epsilon}}\|_{L^2}^2\leq C,$$ and therefore
\begin{equation}
E_{\epsilon}(u_{\epsilon})-\frac{1}{2}\|h_{u_{\epsilon}}\|_{L^2}^2\geq E_{\epsilon}(u_{\epsilon})-C\geq c|\log\epsilon|-C,
\end{equation}
by the original lower bound (\ref{oldebounds}) for $E_{\epsilon}(u_{\epsilon})$. On the other hand, if (\ref{mmcase2}) holds, then  passing to the limit $j\to \infty$ yields
\begin{equation}
E_{\epsilon}(u_{\epsilon})-\frac{1}{2}\|h_{u_{\epsilon}}\|_{L^2}^2\geq c|\log\epsilon|-\frac{1}{2}C|\log\epsilon|^{-1}\|h_{u_{\epsilon}}\|_{L^2}^2,
\end{equation}
and since, by the upper bound in (\ref{oldebounds}),
$$\frac{1}{2}\|h_{u_{\epsilon}}\|_{L^2}^2\leq E_{\epsilon}(u_{\epsilon})\leq C|\log\epsilon|,$$
it follows that
\begin{equation}\label{finalenergest}
E_{\epsilon}(u_{\epsilon})-\frac{1}{2}\|h_{u_{\epsilon}}\|_{L^2}^2\geq c|\log\epsilon|-C
\end{equation}
in this case as well. 

\hspace{6mm} Thus, an estimate of the form (\ref{finalenergest}) must hold for $\epsilon$ sufficiently small, and dividing by $|\log\epsilon|$ and taking $\epsilon \to 0$, we arrive immediately at the desired estimate (\ref{mmenergconc}). Finally, combining (\ref{mmenergconc}) with the result of Theorem \ref{vortconc}, we obtain the conclusion of Theorem \ref{minmaxvort}: the concentration of energy for the min-max solutions produces a nontrivial stationary, rectifiable $(n-2)$-varifold.

\begin{proof} To prove Lemma \ref{cebdslem}, consider $v\in \mathcal{C}_{\epsilon}$ satisfying $E_{\epsilon}(v)\leq \epsilon^{-1/2}$, and suppose that (\ref{hvcase1}) doesn't hold, so that 
\begin{equation}\label{case2cons}
|h_v-j\phi|_b< \pi
\end{equation}
for some \emph{nontrivial} harmonic $\phi \in C^{\infty}(M,S^1)$. For any path $t\mapsto w_t$ in $C^0([0,1],W^{1,2}(M,D^2))$ connecting $w_0\equiv 1$ to $w_1=v$ with $E_{\epsilon}(w_t)\leq \epsilon^{-1/2}$, since 
$$|h_{w_1}-j\phi|_b=|h_v-j\phi|_b< \pi \text{ and }|h_{w_0}-j\phi|_b=|j\phi|_b\geq 2\pi,$$
there must be some $t_0\in [0,1]$ at which 
\begin{equation}\label{distt0}
\dist_b(h_{w_{t_0}},\Lambda)=|h_{w_{t_0}}-j\phi|_b=\pi.
\end{equation}

\hspace{6mm} Now, applying Proposition \ref{threshprop} to this $w_{t_0}$, we obtain the lower bound
\begin{equation}\label{ewtlowbd}
E_{\epsilon}(w_{t_0})\geq \frac{1}{2}(1-\epsilon^{\alpha})\|h_{w_{t_0}}\|_{L^2}^2+C^{-1}\pi |\log\epsilon|-\epsilon^{\alpha}.
\end{equation}
Moreover, (\ref{case2cons}) and (\ref{distt0}) also imply that
$$|h_{w_{t_0}}-h_v|_b\leq 2\pi,$$
so that
$$\|h_{w_{t_0}}-h_v\|^2_{L^2}\leq C'(M),$$
and consequently
\begin{eqnarray*}
\frac{1}{2}\|h_{w_{t_0}}\|_{L^2}^2&\geq &\frac{1}{2}(1-\delta)\|h_v\|_{L^2}^2-\frac{1}{2}(\delta^{-1}-1)C'
\end{eqnarray*}
for any $\delta>0$. Choosing
$$\delta=\frac{C\cdot C'}{\pi|\log\epsilon|}$$
and plugging this back into our lower bound (\ref{ewtlowbd}) for $E_{\epsilon}(w_{t_0})$, we arrive at
\begin{eqnarray*}
E_{\epsilon}(w_{t_0})&\geq &\frac{1}{2}(1-C''|\log\epsilon|^{-1})(1-\epsilon^{\alpha})\|h_v\|_{L^2}^2-\frac{1}{2}C^{-1}\pi|\log\epsilon|-\epsilon^{\alpha}\\
&\geq &\frac{1}{2}(1-C|\log\epsilon|^{-1})\|h_v\|_{L^2}^2-c|\log\epsilon|
\end{eqnarray*}
for $\epsilon<\epsilon_0(M)$ chosen sufficiently small. 

\hspace{6mm} We've now shown that
$$\max_{t\in [0,1]}E_{\epsilon}(w_t)\geq \frac{1}{2}(1-C|\log\epsilon|^{-1})\|h_v\|_{L^2}^2-c|\log\epsilon|$$
for every path $w_t\in W^{1,2}(M,D^2)$ from $1$ to $v$ with $E_{\epsilon}(w_t)\leq E_{\epsilon}(v)$, so the estimate (\ref{hvcase2}) follows from the definition of $\mathcal{C}_{\epsilon}$.
\end{proof}

\section{Appendix: $\eta$-Ellipticity on Manifolds}

\subsection{Preliminaries} Throughout this appendix, $(M^n,g)$ will be a compact, oriented, $n$-dimensional manifold whose sectional curvature $sec_M$ and injectivity radius $inj(M)$ satisfy 
\begin{equation}\label{geobounds}
|sec_M|\leq 1< inj(M).
\end{equation}
We make the trivial observation that if $g$ satisfies (\ref{geobounds}), then so does $c^2g$ for any $c>1$, so that any estimates we obtain under the assumption (\ref{geobounds}) will hold under dilation. We will assume, moreover, that $n\geq 3$, and simply remark that, as in the Euclidean setting (cf. \cite{BBO}), the two-dimensional case of the $\eta$-ellipticity result is a relatively simple consequence of the monotonicity formula and pointwise gradient estimates for Ginzburg-Landau solutions. 

\hspace{6mm} In this subsection, we collect for the convenience of the reader all the basic geometric estimates that we will need to extend the arguments of \cite{BBO} to the curved setting. 

\hspace{6mm} Given a unit geodesic ball $B_1(p)\subset M$, let $g_0$ denote the flat metric on $B_1(p)$ induced by the exponential map $\exp_p$, and let $\rho(x)=\dist(x,p)$. On $B_1(p)$, it follows from (\ref{geobounds}) and the Rauch comparison theorem that
\begin{equation}\label{c0bounds}
\frac{1}{2}g_0\leq\frac{\sin(\rho)}{\rho}g_0\leq g\leq \frac{\sinh(\rho)}{\rho}g_0\leq 2g_0,
\end{equation}
while the Hessian comparison theorem tells us that
\begin{equation}\label{hesscomp}
-\rho^2g\leq(1-\rho\coth(\rho))g\leq g-\frac{1}{2}Hess_g(\rho^2)\leq (1-\rho\cot(\rho))g\leq \rho^2g.
\end{equation}

Following the treatment of Green's functions in Chapter 4 of \cite{Aub}, choose a smooth, nonincreasing function $f(t)$ such that $f(t)=0$ for $t\geq 1$ and $f(t)=1$ for $t\leq \frac{1}{2}$, and denote by $H_p$ the approximate Green's function
$$H_p(x):=-[(n-2)\sigma_{n-1}]^{-1}f(dist(x,p))\cdot dist(x,p)^{2-n},$$
(where $\sigma_{n-1}$ is the area of the standard unit $(n-1)$-sphere). One can then use Hessian comparison (\ref{hesscomp}) to see that
\begin{equation}\label{laphest}
|\Delta H_p(x)|\leq C(n)dist(p,x)^{2-n},
\end{equation}
and since (by the usual Green's formula computation)
$$\varphi(p)=\int_{B_1(p)}H_p\Delta \varphi-\int_{B_1(p)}\varphi\Delta H_p$$
for every $\varphi \in C^{\infty}(M)$, it follows that
\begin{equation}\label{greenest}
\varphi(p)\leq \frac{-1}{(n-2)\sigma_{n-1}}\int_{B_1(p)}\frac{\Delta \varphi(x)}{dist(x,p)^{n-2}}+C(n)\int_{B_1(p)}\frac{\varphi(x)}{dist(x,p)^{n-2}}.
\end{equation}

\hspace{6mm} As an application of (\ref{greenest}), let $\xi\in\Omega^k(M)$ be a smooth $k$-form on $M$, and for $\delta>0$, set
$$\varphi_{\delta}=(\delta+|\xi|^2)^{1/2}.$$
We can use the Bochner formula to compute\footnote{Remark on notation: though we use $\Delta$ to denote the negative spectrum Laplacian on functions, our $\Delta_H$ denotes the (positive spectrum) Hodge Laplacian $dd^*+d^*d$ on forms.}
\begin{eqnarray*}
\Delta \varphi_{\delta}&=&\varphi_{\delta}^{-1}\frac{1}{2}\Delta |\xi|^2-\varphi_{\delta}^{-3}|\frac{1}{2}d|\xi|^2|^2\\
&=&\varphi_{\delta}^{-1}[-\langle \xi,\Delta_H\xi\rangle+\mathcal{R}(\xi,\xi)+|\nabla \xi|^2-\varphi_{\delta}^{-2}|\langle \xi,\nabla \xi\rangle|^2]\\
&\geq &\varphi_{\delta}^{-1}[-\langle \xi,\Delta_H\xi\rangle+\mathcal{R}(\xi,\xi)]\\
&\geq &-|\Delta_H\xi|-C(n)|\xi|,
\end{eqnarray*}
where in the last line we use (\ref{geobounds}) to control the curvature term $\mathcal{R}(\xi,\xi)$. Applying (\ref{greenest}) with $\varphi=\varphi_{\delta}$, we then obtain the estimate
$$\varphi_{\delta}(p)\leq C(n)\int_{B_1(p)}\frac{|\Delta_H\xi|(x)+\varphi_{\delta}(x)}{dist(x,p)^{n-2}},$$
and letting $\delta\to 0$, we conclude that
\begin{equation}\label{xigreen1}
|\xi|(p)\leq C(n)\int_{B_1(p)}\frac{|\Delta_H\xi|(x)+|\xi|(x)}{dist(x,p)^{n-2}}.
\end{equation}
Note, moreover, that for any $\beta \in (0,1]$, applying (\ref{xigreen1}) to the rescaled metric $\beta^{-2}g$ yields
\begin{equation}\label{xigreen2}
|\xi|(p)\leq C(n)\beta^{-2}\int_{B_{\beta}(p)}\frac{|\Delta_H\xi|(x)+|\xi|(x)}{dist(x,p)^{n-2}}.
\end{equation}

\hspace{6mm} Next, using (\ref{c0bounds}) to estimate
$$\|dist(y,x)^{n-2}\|_{L^{\frac{n-1}{n-2}}(B_{\beta}(y))}\leq C(n)\beta^{\frac{n-2}{n-1}},$$
and applying Young's inequality for convolutions (see \cite{Aub}, Section 3.7 for a precise statement in the manifold setting) to control the second term in (\ref{xigreen2}), we find that for any $r+\beta \leq 1$, 
\begin{eqnarray*}
\|\xi\|_{L^s(B_r(p))}&\leq &C(n)\beta^{-2}r^{n/s}\sup_{y\in B_r(p)}\int_{B_{\beta}(y)}\frac{|\Delta_H\xi|(x)}{dist(x,y)^{n-2}}\\
&&+C(n)\beta^{-\frac{n}{n-1}}\|\xi\|_{L^q(B_{r+\beta}(p))}
\end{eqnarray*}
whenever $\frac{1}{q}=\frac{1}{s}+\frac{1}{n-1}$.
In particular, for $r\leq \frac{1}{2}$, taking $\beta=\frac{r}{2(n-1)}$ and iterating this estimate $(n-1)$ times, starting from $s=\infty$ and $q=n-1$, we arrive at the local $L^{\infty}$ estimate
\begin{equation}\label{localsupest}
\|\xi\|_{L^{\infty}(B_r(p))}\leq \frac{C(n)}{r^2}\sup_{y\in B_{2r}(p)}\int_{B_r(y)}\frac{|\Delta_H\xi|(x)}{dist(x,y)^{n-2}}+\frac{C(n)}{r^n}\int_{B_{2r}(p)}|\xi|.
\end{equation}

\hspace{6mm} Taking $r=\frac{1}{2}$ in (\ref{localsupest}) at a point $p$ where $|\xi|$ is maximal, we obtain the simple global estimate
\begin{equation}\label{dumbglob}
\|\xi\|_{L^{\infty}(M)}\leq C(n)\left(\max_{y\in M}\int_M\frac{|\Delta_H\xi|(x)}{dist(x,y)^{n-2}}+\|\xi\|_{L^1(M)}\right).
\end{equation}
As a consequence, we deduce the existence of a constant $C(M)$ such that if $\xi$ is $L^2$-orthogonal to the space $\mathcal{H}^k$ of harmonic $k$-forms, then\footnote{If no such estimate held, we could find a sequence $\xi_j\perp\mathcal{H}^k$ with $\|\xi_j\|_{\infty}=1$ and $\sup_{y\in M}\int_M\frac{|\Delta_H\xi_j|(x)}{dist(x,y)^{n-2}}\to 0$. Then $\int |d^*\xi_j|^2+|d\xi_j|^2=\int \langle \xi_j,\Delta_H\xi_j\rangle\to 0$, while (\ref{dumbglob}) implies $\lim_{j\to\infty}\|\xi_j\|_{L^1(M)}\geq C^{-1}>0$, which is clearly impossible for $\xi_j\perp \mathcal{H}^k$.}
\begin{equation}\label{globperpest}
\|\xi\|_{L^{\infty}(M)}\leq C_M \sup_{y\in M}\int_M\frac{|\Delta_H\xi|(x)}{dist(x,y)^{n-2}}.
\end{equation}
Setting the notation
\begin{equation}\label{capadef}
A_k(M,g):=\sup\{\|\xi\|_{L^{\infty}(M)}/\max_{y\in M}\int_M\frac{|\Delta_H\xi|(x)}{dist(x,y)^{n-2}}\mid \xi \perp \mathcal{H}^k\}<\infty,
\end{equation}
we note that $A_k(M,g)$ is scale invariant--i.e., 
\begin{equation}\label{akscal}
A_k(M,c^2g)=A_k(M,g)\text{ for any }c>0.
\end{equation}

\hspace{6mm} Finally, consider a $k$-form $\omega\in \Omega^k(M)$, and denote by $H(\omega)$ its harmonic part. Letting $h_i$ be an $L^2$ orthonormal basis for $\mathcal{H}^k(M)$, we then have
$$H(\omega)=\int_M\langle \omega,h_i\rangle h_i,$$
and therefore
$$|H(\omega)|_{L^{\infty}(M)}\leq \|\omega\|_{L^1(M)}\cdot\|h_i\|_{L^{\infty}(M)}^2.$$
Setting
\begin{equation}
Q_k(M,g):=\sup\{\frac{\|h\|_{L^{\infty}}^2}{\|h\|_{L^2}^2}\mid h\in \mathcal{H}^k\}\cdot \sup_{y\in M}\int_Mdist(x,y)^{2-n},
\end{equation}
so that
$$\int_M\frac{|H(\omega)|}{dist(x,y)^{n-2}}\leq Q_k(M,g)\|\omega\|_{L^1},$$
it follows that if $\xi=\Delta_H^{-1}(\omega-H(\omega))$, 
\begin{equation}\label{globalgreenest}
\|\xi\|_{L^{\infty}(M)}\leq A_k\left(\sup_{y\in M}\int_M\frac{|\omega(x)|}{dist(x,y)^{n-2}}+Q_k\|\omega\|_{L^1(M)}\right).
\end{equation}
We note that $Q_k(M,g)$ scales like
\begin{equation}\label{qkscal}
Q_k(M,c^2g)=c^{2-n}Q_k(M,g),
\end{equation}
which together with the scale-invariance (\ref{akscal}) of $A_k$, tells us that the class of metrics satisfying
\begin{equation}\label{geobounds2}
A_k(M,g)+Q_k(M,g)\leq C_1
\end{equation}
for a given $C_1>0$ is closed under dilations $g\to c^2g$ for $c>1$.

\vspace{6mm}

\subsection{The Monotonicity Formula} Little effort is needed to extend the Euclidean Ginzburg-Landau monotonicity formula (one of the central tools of \cite{LR} and \cite{BBO}) to the manifold setting. Given $r\in (0,1)$ and a solution $u$ of 
\begin{equation}\label{gleqnapp}
\Delta u=-\epsilon^{-2}(1-|u|^2)u,
\end{equation}
one simply plugs the gradient vector field $X=\frac{1}{2}\nabla dist^2(p,\cdot)^2$ into the inner-variation equation
\begin{equation}\label{bdryiv}
\int_{B_r(p)}\langle e_{\epsilon}(u)Id-du^*du,\nabla X\rangle=\int_{\partial B_r}[e_{\epsilon}(u)Id-du^*du](X,\nu),
\end{equation}
and uses Hessian comparison (\ref{hesscomp}) to arrive at the estimate
\begin{equation}\label{mono1}
\frac{d}{dr}F_{\epsilon}(u,p,r)\geq r^{2-n}\int_{\partial B_r}|\frac{\partial u}{\partial\nu}|^2+2r^{1-n}\int_{B_r(p)}\frac{W(u)}{\epsilon^2},
\end{equation}
where
\begin{equation}\label{bigfedef}
F_{\epsilon}(u,p,r):=e^{\Lambda r^2}r^{2-n}\int_{B_r(p)}e_{\epsilon}(u)
\end{equation}
and $\Lambda(n)=\frac{1}{2}(n+1)$. Throwing away the $|\frac{\partial u}{\partial \nu}|$ term and integrating from $0$ to $s$, one concludes in particular that
\begin{equation}\label{mono2}
F_{\epsilon}(u,p,s)\geq 2\int_0^s r^{1-n}\left(\int_{B_r(p)}\frac{W(u)}{\epsilon^2}\right)dr,
\end{equation}
and, after integrating by parts on the right (using now the assumption that $n\geq 3$), we obtain the useful estimate
\begin{equation}\label{monogreen}
\frac{n}{2}F_{\epsilon}(u,p,s)\geq \int_{B_s(p)}dist(x,p)^{2-n}\frac{W(u)}{\epsilon^2}.
\end{equation}

\vspace{6mm}

\subsection{The ``$\delta$-Energy Decay" Lemma} In this section, we extend the so-called ``$\delta$-energy decay" principle of Bethuel-Brezis-Orlandi to the curved setting--namely, we prove (cf. Theorem 3 of \cite{BBO}) 

\begin{lem}\label{deltaenlem} Let $(M,g)$ be a compact, orientable $3\leq n$-dimensional manifold satisfying $(\ref{geobounds})$ and $(\ref{geobounds2})$ for some\footnote{Of course, that $(\ref{geobounds2})$ holds for \emph{some} $C_1>0$ is automatically true; we're just fixing a particular bound, since it will make it easier to state the estimates.} $C_1>0$, and let $u\in C^{\infty}(M,\mathbb{C})$ be a solution of the $\epsilon$-Ginzburg-Landau equation $(\ref{gleqnapp})$ for some $\epsilon\in (0,1]$. Then, on any geodesic ball $B_{\delta}(p)\subset M$ with $\delta \leq \frac{1}{32}$, an estimate of the following form holds:
\begin{eqnarray*}
\int_{B_{\delta}(p)}e_{\epsilon}(u)&\leq & C(n,C_1)\left(\delta^n+\int_{B_1(p)}\frac{W(u)}{\epsilon^2}\right)\int_{B_1(p)}e_{\epsilon}(u)\\
&&+C(n)\int_{B_1(p)}\frac{W(u)}{\epsilon^2}.
\end{eqnarray*}
\end{lem}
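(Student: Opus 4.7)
The plan is a harmonic-approximation argument: decompose the one-form $ju$ via Hodge decomposition of a cutoff, use the sharp $\delta^n$ energy decay of harmonic forms for the model part, and control the discrepancy by the potential $\int_{B_1}W/\epsilon^2$. Observe first that the claimed inequality is implied by the trivial bound $\int_{B_\delta}e_\epsilon(u)\le\int_{B_1}e_\epsilon(u)$ whenever $\int_{B_1}W/\epsilon^2\ge\eta_0(n,C_1)$, so we may assume $\int_{B_1}W/\epsilon^2$ is small. By the maximum principle for (\ref{gleqnapp}) we may also assume $|u|\le 1$ throughout.

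The first reduction uses the pointwise identity $|du|^2=|d|u||^2+|u|^{-2}|ju|^2$ on $\{|u|>0\}$. On $\{|u|\ge\tfrac12\}$ we have $|u|^{-2}|ju|^2\le 4|ju|^2$; on $\{|u|<\tfrac12\}$ the pointwise gradient bound $|du|^2\le C\epsilon^{-2}(1-|u|^2)$ (standard for bounded GL solutions, cf.\ \cite{Stern}) combined with the Chebyshev-type measure bound $|\{|u|<\tfrac12\}\cap B_1|\le C\epsilon^2\int_{B_1}W/\epsilon^2$ yields $\int_{\{|u|<1/2\}\cap B_1}|du|^2\le C\int_{B_1}W/\epsilon^2$. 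Thus, modulo additive errors of type $C\int_{B_1}W/\epsilon^2$ and $\int_{B_\delta}|d|u||^2$, it suffices to bound $\int_{B_\delta}|ju|^2$.

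The main step is a global Hodge decomposition of the cutoff form $\omega:=\eta\,ju$, where $\eta\in C_c^\infty(B_1(p))$ with $\eta\equiv 1$ on $B_{1/2}(p)$: write $\omega=d\alpha+d^*\xi+h$. Using $d^*(ju)=0$ (a consequence of (\ref{gleqnapp})), one computes $\Delta\alpha=-\langle d\eta,ju\rangle$ and $\Delta_H\xi=d\eta\wedge ju+\eta\,dju$. The harmonic part enjoys $\|h\|_{L^\infty(M)}^2\le C_1\|\omega\|_{L^2}^2\le 2C_1\int_{B_1}e_\epsilon(u)$ by the $L^\infty$-$L^2$ comparison built into $Q_k$, giving $\int_{B_\delta}|h|^2\le C\delta^n\int_{B_1}e_\epsilon(u)$ -- this supplies the $\delta^n$ factor in the statement. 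The $d\eta$-supported contributions to $d\alpha$ and to $d^*\xi$ come from data in the annulus $B_1\setminus B_{1/2}$, far from $B_\delta(p)$, so the weighted Green's-kernel estimates (\ref{xigreen2}), (\ref{localsupest}), together with the scale-invariance (\ref{akscal}), (\ref{qkscal}) of $A_k$ and $Q_k$, give $L^\infty$ bounds of order $(\int_{B_1}e_\epsilon)^{1/2}$ on $B_\delta(p)$ and hence an $O(\delta^n\int_{B_1}e_\epsilon)$ contribution to the estimate.

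The main obstacle is the $\eta\,dju$ contribution to $d^*\xi$: the naive pointwise bound $|dju|\le 2|du|^2\le 4e_\epsilon(u)$ is too crude, producing neither a $\delta^n$ nor an $\int W/\epsilon^2$ factor. The resolution is to exploit the algebraic identity $d(ju/|u|^2)=0$ on $\{|u|>0\}$ -- valid because $ju/|u|^2$ is locally $d\theta$ for $u=|u|e^{i\theta}$ -- which reduces $dju$ on the good set to $d|u|^2\wedge(ju/|u|^2)$, a bilinear expression in $|d|u||$ and $|ju|$. Combining Cauchy-Schwarz with a Caccioppoli-type argument for $\int|d|u||^2$ (testing the equation for $|u|^2$ against a suitable cutoff to gain a factor of $(1-|u|^2)/\epsilon^2$) and treating the vortex set as in the first reduction, one arrives at an $L^1$ bound
$$\int_{B_1}|\eta\,dju|\le C\left(\int_{B_1}W/\epsilon^2\right)^{1/2}\left(\int_{B_1}e_\epsilon(u)\right)^{1/2}+C\int_{B_1}W/\epsilon^2.$$
Feeding this into (\ref{globalgreenest}) applied to $\xi$ (after subtracting its harmonic projection) and extracting $d^*\xi$ via interior $L^2$ elliptic estimates for the Hodge Laplacian produces $\int_{B_\delta}|d^*\xi|^2\le C(n,C_1)\int_{B_1}W/\epsilon^2\cdot\int_{B_1}e_\epsilon(u)+C\int_{B_1}W/\epsilon^2$, which combined with the earlier terms yields the claim. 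The technically delicate step, where the bulk of the work lies, is this last Caccioppoli upgrade of $\int|d|u||^2$ from an $\int e_\epsilon$ bound to one carrying the required $\int W/\epsilon^2$ factor, following the scheme of \cite{BBO}.
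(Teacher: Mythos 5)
Your first reduction (polar decomposition of $|du|^2$ plus gradient estimate on the vortex set) and your identification of $\eta\,dju$ as the main obstacle are both sound, and your factorization $dju = d|u|^2\wedge(ju/|u|^2)$ is the same algebraic fact the paper exploits. But the route you take from there has a genuine gap.

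The paper does not work with $ju$ directly: it introduces $\alpha = \phi(|u|^2)ju$ with $\phi(t)=1/t$ for $t\geq 3/4$, so that $d\alpha$ \emph{vanishes identically} on $\{|u|^2\geq 3/4\}$, and on the complement the gradient bound $|du|^2\leq C\epsilon^{-2}(1-|u|^2)$ together with $1-|u|^2>1/4$ gives the \emph{pointwise} estimate $|d\alpha|\leq C\,W(u)/\epsilon^2$. This pointwise control is what makes the whole argument close: defining $\xi = \Delta_H^{-1}(\zeta\,d\alpha - H(\zeta\,d\alpha))$, the paper bounds $\|\xi\|_{L^\infty(M)}\leq C\int_{B_1}e_\epsilon(u)$ by combining (\ref{globalgreenest}) with the monotonicity estimate (\ref{monogreen}) applied to the weighted integral $\int |\zeta\,d\alpha|\,\mathrm{dist}^{2-n}\leq \int(W/\epsilon^2)\,\mathrm{dist}^{2-n}$, and then gets $\|d^*\xi\|_{L^2}^2 = \int\langle\xi,\Delta_H\xi\rangle\leq\|\xi\|_\infty\|\zeta\,d\alpha\|_{L^1}\leq C\bigl(\int_{B_1}e_\epsilon\bigr)\bigl(\int_{B_1}W/\epsilon^2\bigr)$. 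The pointwise bound enters twice: once for $\|\cdot\|_{L^1}$, and once (through monotonicity) for $\|\xi\|_\infty$.

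Your replacement $\omega'=\eta\,dju$ does not satisfy any such pointwise bound: $|dju|\lesssim |d|u|^2|\,|ju|$ on the good set is a product of two quantities neither of which is pointwise small, and each is controlled only in $L^2$. You therefore obtain $\|\omega'\|_{L^1}\lesssim\bigl(\int W/\epsilon^2\bigr)^{1/2}\bigl(\int e_\epsilon\bigr)^{1/2}$, but this $L^1$ control is not enough: to use the duality $\|d^*\xi\|_{L^2}^2\leq\|\xi\|_\infty\|\omega'\|_{L^1}$ you still need $\|\xi\|_\infty$, which the appendix machinery (\ref{xigreen2}), (\ref{globalgreenest}) only delivers from the \emph{weighted} integral $\sup_y\int|\omega'|\,\mathrm{dist}(\cdot,y)^{2-n}$; a bare $L^1$ bound gives no control on that. (And $\int_{B_1}|ju|^2\,\mathrm{dist}^{2-n}$ is not controlled by $\int e_\epsilon$ — the monotonicity formula (\ref{mono1}) gives only the normal derivative and potential weighted integrals, not the full $|du|^2$ one, which can be as large as $|\log\epsilon|\int e_\epsilon$.) Even if you could somehow match $\|\xi\|_\infty\lesssim\bigl(\int W/\epsilon^2\bigr)^{1/2}\bigl(\int e_\epsilon\bigr)^{1/2}$ to $\|\omega'\|_{L^1}$, the step that produces such an $L^\infty$ bound is missing. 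The fix is to replace $ju$ by $\alpha=\phi(|u|^2)ju$ in your Hodge decomposition step, recovering the pointwise bound $|d\alpha|\lesssim W/\epsilon^2$ and then arguing as above; the discrepancy $|ju-\alpha|^2\leq C\,W/\epsilon^2$ (pointwise) lets you transfer the final $L^2$ estimate back to $ju$ at no cost.
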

As an immediate consequence, since the conditions (\ref{geobounds}) and (\ref{geobounds2}) are preserved under dilation, we can apply the lemma to the metrics $\frac{1}{r^2}g$ for $r\in (0,1)$ to obtain
\begin{cor}\label{deltaencor} Let $(M^n,g)$ and $u$ satisfy the hypotheses of Lemma \ref{deltaenlem}. Then, for any $r\in [\epsilon,1]$ and $\delta\leq \frac{1}{32}$, we have
\begin{eqnarray*}
\int_{B_{\delta r}(p)}e_{\epsilon}(u)&\leq & C(n,C_1)\left(\delta^n+r^{2-n}\int_{B_r(p)}\frac{W(u)}{\epsilon^2}\right)\int_{B_r(p)}e_{\epsilon}(u)\\
&&+C(n)\int_{B_r(p)}\frac{W(u)}{\epsilon^2}.
\end{eqnarray*}
\end{cor}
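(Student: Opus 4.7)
The corollary is a rescaling consequence of Lemma \ref{deltaenlem}, and my plan is to make the rescaling argument explicit. The strategy is to pull back the Ginzburg-Landau problem from scale $r$ to scale $1$ via the dilated metric $g_r:=r^{-2}g$, apply Lemma \ref{deltaenlem} in that setting, and then translate back.

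First I would verify that the hypotheses of Lemma \ref{deltaenlem} remain valid for $(M,g_r)$. Because $r\in[\epsilon,1]\subset(0,1]$, the scaling factor $c:=r^{-1}$ satisfies $c\geq 1$, and the excerpt explicitly notes that both $(\ref{geobounds})$ and $(\ref{geobounds2})$ (via the scale invariance $(\ref{akscal})$ of $A_k$ together with the positive scaling $(\ref{qkscal})$ of $Q_k$) are preserved under $g\mapsto c^2g$ with $c\geq 1$. Next, I would check that $u$ solves a Ginzburg-Landau equation on $(M,g_r)$: the Laplace-Beltrami operator transforms as $\Delta_{g_r}=r^2\Delta_g$, so $(\ref{gleqnapp})$ becomes $\Delta_{g_r}u=-(\epsilon/r)^{-2}(1-|u|^2)u$, i.e.\ $u$ is a $\tilde\epsilon$-Ginzburg-Landau solution with $\tilde\epsilon:=\epsilon/r$. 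The condition $\tilde\epsilon\in(0,1]$ is precisely the assumption $r\geq \epsilon$.

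The second step is the dictionary between $g$-quantities and $g_r$-quantities. A $g_r$-ball of radius $s$ around $p$ equals the $g$-ball $B_{sr}(p)$; the volume form scales by $r^{-n}$; and $|du|_{g_r}^2=r^2|du|_g^2$, so the energy density transforms as $e_{\tilde\epsilon,g_r}(u)=r^2 e_{\epsilon,g}(u)$ and the potential term transforms as $\tilde\epsilon^{-2}W(u)=r^2\epsilon^{-2}W(u)$. Combining these,
\begin{equation*}
\int_{B_s^{g_r}(p)}e_{\tilde\epsilon,g_r}(u)\,dv_{g_r}=r^{2-n}\int_{B_{sr}(p)}e_\epsilon(u),\qquad \int_{B_s^{g_r}(p)}\tilde\epsilon^{-2}W(u)\,dv_{g_r}=r^{2-n}\int_{B_{sr}(p)}\epsilon^{-2}W(u).
\end{equation*}
Since $\delta\leq \frac{1}{32}$, I can now apply Lemma \ref{deltaenlem} on $(M,g_r)$ at the point $p$, reading ``$\delta$'' as $\delta$ and using the $g_r$-unit ball in place of the $g$-unit ball.

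The final step is substitution and cleanup: plugging the transformation identities above into the conclusion of Lemma \ref{deltaenlem} yields
\begin{equation*}
r^{2-n}\int_{B_{\delta r}(p)}e_\epsilon(u)\leq C(n,C_1)\left(\delta^n+r^{2-n}\int_{B_r(p)}\tfrac{W(u)}{\epsilon^2}\right)r^{2-n}\int_{B_r(p)}e_\epsilon(u)+C(n)\,r^{2-n}\int_{B_r(p)}\tfrac{W(u)}{\epsilon^2},
\end{equation*}
and multiplying through by $r^{n-2}$ gives exactly the stated estimate. There is no substantive obstacle here; the only care required is bookkeeping the scaling exponents correctly and ensuring that both geometric hypotheses and the range $\tilde\epsilon\leq 1$ survive the dilation—both of which were prepared for in the preliminaries subsection via $(\ref{akscal})$, $(\ref{qkscal})$, and the closure of $(\ref{geobounds2})$ under upscaling.
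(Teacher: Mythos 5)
Your proof is correct and follows exactly the route the paper indicates: dilate to $g_r=r^{-2}g$, observe that $u$ becomes an $\tilde\epsilon=\epsilon/r$ Ginzburg-Landau solution with $\tilde\epsilon\leq 1$ when $r\geq\epsilon$, check (via the scale-invariance of $A_k$, the $c^{2-n}$ scaling of $Q_k$ with $n\geq 3$, and the Rauch/Hessian bounds) that (\ref{geobounds}) and (\ref{geobounds2}) persist under upscaling, apply Lemma~\ref{deltaenlem}, and undo the $r^{2-n}$ factor. The scaling dictionary you write down is accurate, so no further comment is needed.
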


\begin{proof} To prove Lemma \ref{deltaenlem}, as in \cite{BBO}, we begin by decomposing $e_{\epsilon}(u)$ into
\begin{equation}\label{edecompapp}
e_{\epsilon}(u)=\frac{1}{2}(1-|u|^2)|du|^2+\frac{1}{2}|ju|^2+\frac{1}{8}|d|u|^2|^2+\frac{W(u)}{\epsilon^2},
\end{equation}
and observe that $\int_{B_{1/8}(p)}[e_{\epsilon}(u)-\frac{1}{2}|ju|^2]$ can be easily estimated in terms of $\int_{B_1(p)}\frac{W(u)}{\epsilon^2}$. In particular, we claim that
\begin{equation}\label{nonjparts}
\int_{B_{1/8}(p)}[e_{\epsilon}(u)-\frac{1}{2}|ju|^2]\leq C(n)\int_{B_1(p)}\frac{W(u)}{\epsilon^2}.
\end{equation}
To see this, we first recall from \cite{Stern} the simple gradient estimate
\begin{equation}\label{bochgradest}
|du|^2\leq [\epsilon^{-2}+|Ric|_{\infty}](1-|u|^2)\leq [\epsilon^{-2}+C(n)](1-|u|^2),
\end{equation}
from which the pointwise estimate
\begin{equation}\label{sigmaduest}
\frac{1}{2}(1-|u|^2)|du|^2\leq \frac{1}{2}[\epsilon^{-2}+C(n)](1-|u|^2)^2\leq C(n)\frac{W(u)}{\epsilon^2}
\end{equation}
follows immediately. To estimate the $d|u|^2$ term, we introduce a nonincreasing cutoff function $\zeta(t)$ satisfying 
\begin{equation}\label{zetadef}
\zeta(t)=1\text{ for }t\leq \frac{1}{8}\text{ and }\zeta(t)=0\text{ for }t\geq \frac{1}{4}.
\end{equation}
Letting $\rho$ again denote $dist(p,\cdot)$, we use the relation
$$\frac{1}{2}\Delta (1-|u|^2)=\frac{(1-|u|^2)|u|^2}{\epsilon^2}-|du|^2$$
together with (\ref{bochgradest}) and (\ref{sigmaduest}) to compute
\begin{eqnarray*}
\int_M\zeta(\rho)^2|d(1-|u|^2)|^2&=&\int_M 2\zeta(\rho)\zeta'(\rho)(1-|u|^2)\langle d\rho,d|u|^2\rangle\\
&&-\int_M\zeta(\rho)^2(1-|u|^2)\Delta (1-|u|^2)\\
&\leq &\int_{B_1(p)} 2\|\zeta'\|_{\infty}(1-|u|^2)\zeta(\rho)|d|u|^2|+2(1-|u|^2)|du|^2\\
&\leq &\int_{B_1(p)}\frac{1}{2}\zeta(\rho)^2|d(1-|u|^2)|^2+2\|\zeta'\|_{\infty}^2(1-|u|^2)^2\\
&&+C(n)\int_{B_1(p)}\frac{W(u)}{\epsilon^2},
\end{eqnarray*}
from which we conclude that
$$\int_{B_{1/8}}|d|u|^2|^2\leq\int_M\zeta(\rho)^2|d(1-|u|^2)|^2\leq C(n)\int_{B_1(p)}\frac{W(u)}{\epsilon^2}.$$
Combining this with (\ref{sigmaduest}), we establish the claimed inequality (\ref{nonjparts}).

\hspace{6mm} We turn now to the problem of estimating the $\int_{B_{\delta}}|ju|^2$ term. As in \cite{BBO}, instead of working directly with $ju$, we will first find estimates of the desired form for the one-form
\begin{equation}
\alpha:=\phi(|u|^2)ju,
\end{equation}
where $\phi(t)$ is a nonnegative function satisfying
\begin{equation}\label{phidef}
\phi(t)=\frac{1}{t}\text{ for }t\geq \frac{3}{4}\text{, }\phi(t)=1\text{ for }t\leq \frac{1}{4},\text{ and }|\phi'|\leq 2.
\end{equation}
The benefit of working with $\alpha$ instead of $ju$ comes from the fact that
$$d\alpha=dj(u/|u|)=0\text{ when }|u|^2\geq \frac{3}{4},$$
so that
$$|d\alpha|\leq C(1-|u|^2)|du^1\wedge du^2|\leq C(1-|u|^2)|du|^2,$$
and therefore, by (\ref{sigmaduest}),
\begin{equation}\label{dalphawapp}
|d\alpha|\leq C(n)\frac{W(u)}{\epsilon^2}.
\end{equation}
To see that the desired estimates for $ju$ will follow from those for $\alpha$, simply note that, by definition of $\phi(t)$,
$$|\phi(t)-1|\leq C_0(1-t)\text{ for all }t\in [0,1],$$
and consequently,
\begin{equation}\label{appjualpha}
|ju-\alpha|^2\leq (1-\phi(|u|^2))^2|ju|^2\leq C_0(1-|u|^2)^2|du|^2\leq C(n)\frac{W(u)}{\epsilon^2},
\end{equation}
by (\ref{sigmaduest}).

\hspace{6mm} Let $\zeta(\rho)$ again be the radial cutoff function given by (\ref{zetadef}). Consider the two-form
$$\omega:=\zeta(\rho)d\alpha.$$
and define
\begin{equation}
\xi:=\Delta_H^{-1}(\omega-H(\omega)).
\end{equation}
By (\ref{dalphawapp}) and the choice of $\zeta$, we have the pointwise estimate
\begin{equation}\label{ptwiseomega}
|\omega|\leq \zeta(\rho)\frac{W(u)}{\epsilon^2}\leq \frac{W(u)}{\epsilon^2}\chi_{B_{1/4}(p)}
\end{equation}
so applying (\ref{globalgreenest}) and (\ref{geobounds2}) to $\xi$, it follows that
\begin{equation}\label{xisupestapp1}
\|\xi\|_{\infty}\leq C(n,C_1)\left(\sup_{y\in M}\int_{x\in B_{1/4}(p)}\frac{W(u)/\epsilon^2}{dist(x,y)^{n-2}}+\int_{B_{1/4}(p)}\frac{W(u)}{\epsilon^2}\right).
\end{equation}
Note that if $dist(y,p)>\frac{3}{8}$, then 
\begin{equation}
\int_{x\in B_{1/4}(p)}\frac{W(u)/\epsilon^2}{dist(x,y)^{n-2}}\leq 8^{n-2}\int_{B_{1/4}(p)}\frac{W(u)}{\epsilon^2},
\end{equation}
while if $dist(y,p)\leq \frac{3}{8}$, we have 
\begin{equation}\label{yintgreenest}
\int_{x\in B_{1/4}(p)}\frac{W(u)/\epsilon^2}{dist(x,y)^{n-2}}\leq \int_{B_{5/8}(y)}\frac{W(u)/\epsilon^2}{dist(x,y)^{n-2}}.
\end{equation}
The trick now (cf. \cite{BBO}) is to apply the monotonicity estimate (\ref{monogreen}) at the points $y\in B_{3/8}(p)$ to find that
\begin{equation}\label{monogreencons}
\int_{B_{5/8}(y)}\frac{W(u)/\epsilon^2}{dist(x,y)^{n-2}}\leq C(n)\int_{B_{5/8}(y)}e_{\epsilon}(u)\leq C(n)\int_{B_1(p)}e_{\epsilon}(u),
\end{equation}
so that, combining (\ref{xisupestapp1})-(\ref{monogreencons}), we arrive at 
\begin{equation}\label{mainxisupest}
\|\xi\|_{L^{\infty}(M)}\leq C(n,C_1)\int_{B_1(p)}e_{\epsilon}(u).
\end{equation}
With the estimates (\ref{mainxisupest}) and (\ref{ptwiseomega}) in hand, we compute
\begin{eqnarray*}
\int_M\langle \xi,\Delta_H\xi\rangle&=&\int_M\langle \xi,\omega-H(\omega)\rangle\\
&=&\int_M\langle \xi,\omega\rangle\\
&\leq &\|\xi\|_{\infty}\int_M|\omega|\\
&\leq &C(n,C_1)\left(\int_{B_1(p)}e_{\epsilon}(u)\right)\left(\int_{B_1(p)}\frac{W(u)}{\epsilon^2}\right),
\end{eqnarray*}
from which it follows that
\begin{equation}\label{appdstarxiest}
\|d^*\xi\|_{L^2(M)}^2\leq C(n,C_1)\left(\int_{B_1(p)}e_{\epsilon}(u)\right)\int_{B_1(p)}\frac{W(u)}{\epsilon^2}.
\end{equation}

\hspace{6mm} Next, let $\varphi\in C^{\infty}(M)$ solve 
$$\Delta\varphi=div(\zeta(\rho)\alpha),$$
so that $d\varphi$ gives the exact part of $\zeta(\rho)\alpha$, and consider also the solution $\psi \in C^{\infty}(M)$ of $$\Delta\psi=div(\zeta(\rho)ju).$$
It then follows from (\ref{appjualpha}) that
\begin{equation}\label{phipsidiff}
\int_M|d\varphi-d\psi|^2\leq \int_M\zeta(\rho)^2|ju-\alpha|^2\leq C(n)\int_{B_1(p)}\frac{W(u)}{\epsilon^2},
\end{equation}
and since $div(ju)=0$, we note that $\psi$ is harmonic on $B_{1/8}(p)$. In particular, we can apply (\ref{localsupest}) to $d\psi$ with $r=\frac{1}{32}$ to conclude that
\begin{equation}
\|d\psi\|_{L^{\infty}(B_{1/32}(p))}\leq C(n)\|d\psi\|_{L^1(B_1(p))}\leq C'(n)\|d\psi\|_{L^2(B_1(p))},
\end{equation}
so that, for any $\delta\leq \frac{1}{32}$,
\begin{equation}\label{psideltaballest}
\int_{B_{\delta}(p)}|d\psi|^2\leq C(n)\delta^n\|\zeta(\rho)ju\|_{L^2(M)}^2\leq C(n)\delta^n\int_{B_1(p)}e_{\epsilon}(u).
\end{equation}
Putting together (\ref{phipsidiff}) and (\ref{psideltaballest}), we find that
\begin{equation}\label{phideltaballest}
\int_{B_{\delta}(p)}|d\varphi|^2\leq C(n)[\delta^n\int_{B_1(p)}e_{\epsilon}(u)+\int_{B_1(p)}\frac{W(u)}{\epsilon^2}]
\end{equation}
whenever $\delta\leq \frac{1}{32}$.

\hspace{6mm} It remains to estimate the $L^2$ norm of the difference
$$\beta:=\zeta(\rho)\alpha-d\varphi-d^*\xi$$
on $B_{\delta}(p)$. Since $d^*d\varphi=d^*(\zeta(\rho)\alpha)$ by definition of $\varphi$, it's clear that
$$d^*\beta=0,$$
so that
\begin{eqnarray*}
\Delta_H\beta&=&d^*d\beta\\
&=&d^*d(\zeta(\rho)\alpha-d^*\xi)\\
&=&d^*[d(\zeta(\rho)\alpha)-dd^*\xi]-d^*[d^*d\xi+H(\zeta(\rho)d\alpha)]\\
&=&\zeta'(\rho)d\rho\wedge \alpha,
\end{eqnarray*}
by definition of $\xi$. In particular, it follows from the choice of $\zeta$ that $\beta$ is harmonic on $B_{1/8}(p)$, so we can again apply (\ref{localsupest}) on $B_{1/32}(p)$ to conclude that
\begin{equation}\label{betalinf1}
\|\beta\|_{L^{\infty}(B_{1/32}(p))}\leq C(n)\|\beta\|_{L^2(M)}.
\end{equation}
On the other hand, since $\beta+d^*\xi$ gives the exact part of $\zeta(\rho)\alpha$, we know that
$$\int_M|\beta+d^*\xi|^2\leq \int_M\zeta(\rho)^2|\alpha|^2\leq C_0\int_{B_1(p)}|ju|^2,$$
which, together with (\ref{betalinf1}), leads us to the estimate
\begin{equation}\label{appbetasupest}
\|\beta\|^2_{L^{\infty}(B_{1/32}(p))}\leq C(n)\left(\int_{B_1(p)}e_{\epsilon}(u)+\|d^*\xi\|_{L^2(M)}^2\right).
\end{equation}

\hspace{6mm} Now, combining (\ref{appdstarxiest}), (\ref{phideltaballest}), and (\ref{appbetasupest}), we see that for any $\delta\leq \frac{1}{32},$ 
\begin{eqnarray*}
\int_{B_{\delta}(p)}|\alpha|^2&\leq &4\int_{B_{\delta}(p)}(|d\varphi|^2+|d^*\xi|^2+|\beta|^2)\\
&\leq &C(n)[\delta^n\int_{B_1(p)}e_{\epsilon}(u)+\int_{B_1(p)}\frac{W(u)}{\epsilon^2}]+4\|d^*\xi\|_{L^2(M)}^2\\
&&+C(n)\delta^n\left(\int_{B_1(p)}e_{\epsilon}(u)+\|d^*\xi\|_{L^2(M)}^2\right)\\
&\leq &C(n)\delta^n\int_{B_1(p)}e_{\epsilon}(u)+C(n)\int_{B_1(p)}\frac{W(u)}{\epsilon^2}\\
&&+C(n,C_1)\int_{B_1(p)}e_{\epsilon}(u)\cdot \int_{B_1(p)}\frac{W(u)}{\epsilon^2}.
\end{eqnarray*}
Finally, we use (\ref{appjualpha}) to see that
$$\int_{B_{\delta}(p)}|ju|^2\leq 2\int_{B_{\delta}(p)}|\alpha|^2+2C(n)\int_{B_1(p)}\frac{W(u)}{\epsilon^2},$$
which, together with the preceding computation and (\ref{nonjparts}), brings us to the desired estimate
\begin{eqnarray*}
\int_{B_{\delta}(p)}e_{\epsilon}(u)&\leq & C(n,C_1)\left(\delta^n+\int_{B_1(p)}\frac{W(u)}{\epsilon^2}\right)\int_{B_1(p)}e_{\epsilon}(u)\\
&&+C(n)\int_{B_1(p)}\frac{W(u)}{\epsilon^2}.
\end{eqnarray*}

\end{proof}

\subsection{The Eta-Ellipticity Result} Having collected the essential lemmas, we turn now to the proof of the $\eta$-ellipticity theorem. Namely, we'll prove the following statement, from which the general version stated in Proposition \ref{etaellip} follows by rescaling:

\begin{prop}\emph{(cf. \cite{BBO}, \cite{LR})} Let $(M^n,g)$ be a compact, orientable $3\leq n$-dimensional manifold satisfying $(\ref{geobounds})$, and let $u\in C^{\infty}(M,\mathbb{C})$ be a solution of the $\epsilon$-Ginzburg-Landau equation $(\ref{gleqnapp})$ for $\epsilon\in (0,\epsilon_0]$, $\epsilon_0=\epsilon_0(M)$. Then there is a constant $\eta(M)>0$ such that if $|u(p)|\leq \frac{1}{2}$, then
\begin{equation}
\int_{B_1(p)}e_{\epsilon}(u)\geq \eta\log (1/\epsilon).
\end{equation}
\end{prop}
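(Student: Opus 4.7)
The plan is to argue by contradiction via a dyadic iteration combining the monotonicity formula with Corollary~\ref{deltaencor}, following the Lin-Rivi\`ere and Bethuel-Brezis-Orlandi scheme. Suppose $|u(p)|\leq 1/2$ yet $\int_{B_1(p)} e_\epsilon(u)\leq \eta\log(1/\epsilon)$ with $\eta$ to be chosen small. The first step is to convert the hypothesis $|u(p)|\leq 1/2$ into a scale-invariant lower bound: the pointwise gradient estimate (\ref{bochgradest}) forces $|du|^2\leq C\epsilon^{-2}$, so $|u|\leq 3/4$ on a ball $B_{c_0\epsilon}(p)$ for some $c_0(M)>0$; hence $\int_{B_{c_0\epsilon}(p)} W(u)/\epsilon^2\geq c_1\epsilon^{n-2}$, and feeding this into the monotonicity formula (\ref{mono1}) delivers
$$r^{2-n}\int_{B_r(p)} e_\epsilon(u)\geq c_2(M)>0\quad\text{for every }r\in[c_0\epsilon,1].$$

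Next I would set $r_k:=\delta^k$ with $\delta=\delta(n)\in(0,1/32)$ small enough that the factor $C(n,C_1)\delta^n\cdot\delta^{2-n}$ arising when one divides Corollary~\ref{deltaencor} by $(\delta r)^{n-2}$ is at most $\tfrac14$, and record the normalized quantities $F_k:=r_k^{2-n}\int_{B_{r_k}(p)} e_\epsilon(u)$ and $W_k:=r_k^{2-n}\int_{B_{r_k}(p)} W(u)/\epsilon^2$. The corollary then yields the recurrence
$$F_{k+1}\leq \tfrac14 F_k + A\,W_k(F_k+1),$$
valid for all $k$ with $r_k\geq\epsilon$, for some $A=A(M,\delta)<\infty$. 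Simultaneously, a dyadic decomposition of the potential-energy bound (\ref{mono2}) yields $\sum_{k:\,r_k\geq\epsilon} W_k\leq C(M,\delta)\cdot\eta\log(1/\epsilon)$, so the $\eta$-smallness of the total energy controls the aggregate potential contributions along the iteration.

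I would then run the recurrence down to the last scale $r_K\in[c_0\epsilon,\delta^{-1}c_0\epsilon]$ and split the indices into \emph{good} ones ($W_k\leq\lambda_0$) and \emph{bad} ones ($W_k>\lambda_0$) using a threshold $\lambda_0=\lambda_0(M,\delta)\leq 1/(4A)$. At good indices the recurrence collapses to $F_{k+1}\leq \tfrac12 F_k + A\lambda_0$, giving genuine geometric decay; at bad indices one falls back on the monotonicity step $F_{k+1}\leq e^{\Lambda(r_k^2-r_{k+1}^2)}F_k$, whose prefactors telescope to $\leq e^\Lambda$ across the whole iteration. The bound on $\sum W_k$ caps the number of bad indices by $N_b\leq C\eta\lambda_0^{-1}\log(1/\epsilon)$, a small fraction of $K\sim\log(1/\epsilon)/\log(1/\delta)$ once $\eta$ is small. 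Careful accounting then yields a bound of the form $F_K\leq (\tfrac12)^{K-N_b}e^\Lambda F_0 + C(M,\delta)\lambda_0$; choosing $\lambda_0$ so that $C(M,\delta)\lambda_0\leq c_2/4$, and then $\eta(M)$ so that $N_b\leq K/2$ and the first term is also $\leq c_2/4$ for $\epsilon<\epsilon_0(M)$, one arrives at $F_K\leq c_2/2$, contradicting the lower bound of the first step.

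I expect the main obstacle to be the bookkeeping in this last iteration: the bad indices can in principle multiply $F_k$ substantially, and one must exploit the fact that the per-step monotonicity prefactors $e^{\Lambda(r_k^2-r_{k+1}^2)}$ telescope \emph{globally} (rather than being applied as independent multiplicative errors) while simultaneously controlling the additive $A\lambda_0$ contributions via the geometric decay across the (abundant) good runs. The scale-invariance properties (\ref{akscal}), (\ref{qkscal}) are what ensure that the constants in Corollary~\ref{deltaencor} remain uniform across the dyadic scales, which is essential to closing the iteration.
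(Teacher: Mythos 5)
Your proposal is correct in outline, but it follows a genuinely different route from the paper's proof, which is worth spelling out. The paper's argument is a \emph{single-shot} pigeonhole: one locates, via the monotonicity formula, a single ``good'' radius $r_0\in(\epsilon^{1/2},\delta)$ at which both the $F$-increment $F_\epsilon(u,p,r_0)-F_\epsilon(u,p,\delta r_0)$ and the rescaled potential $r_0^{2-n}\int_{B_{r_0}}W(u)/\epsilon^2$ are controlled by $\eta|\log\delta|$; one application of Corollary~\ref{deltaencor} at that scale then yields $(1-C\eta\delta^{2-n}|\log\delta|-C\delta^2)F_\epsilon(u,p,r_0)\leq C\eta|\log\delta|$, and the trick is to choose $\delta=\eta^{1/(2(n-2))}$ so that this collapses to $F_\epsilon(u,p,r_0)\leq\eta^{1/2}$, which by monotonicity propagates down to scale $\epsilon$ and contradicts the pointwise lower bound. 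Your plan instead iterates Corollary~\ref{deltaencor} across \emph{all} dyadic scales $r_k=\delta^k$ with a \emph{fixed} $\delta(n,C_1)$, splits into good/bad indices by a fixed threshold $\lambda_0$, and exploits that the aggregate potential $\sum_k W_k\leq C\eta|\log\epsilon|$ caps the number of bad indices; the per-step monotonicity prefactors $e^{\Lambda(r_k^2-r_{k+1}^2)}$ telescope to a bounded constant, so the geometric decay at the $\gtrsim K/2$ good indices eventually overwhelms $F_0\sim\eta|\log\epsilon|$ since $(1/2)^{K/2}\sim\epsilon^{c'}$ is a power of $\epsilon$. Both approaches rest on the same three ingredients---the $\delta$-energy-decay corollary, the monotonicity formula, and the Bochner gradient estimate at scale $\sim\epsilon$---but yours is an iteration argument (closer in spirit to Lin--Rivi\`ere and to standard elliptic $\epsilon$-regularity iterations), whereas the paper's, following Bethuel--Brezis--Orlandi, avoids the good/bad bookkeeping at the price of making $\delta$ depend on $\eta$. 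Your telescoping strategy does close: passing to $\tilde F_k:=F_k/\prod_{\text{bad }j<k}a_j$ (with $a_j=e^{\Lambda(r_j^2-r_{j+1}^2)}$, $\prod a_j\leq e^{\Lambda}$) makes the bad steps non-increasing and leaves the good steps as $\tilde F_{k+1}\leq\tfrac12\tilde F_k+A\lambda_0$, so $F_K\leq e^{\Lambda}[(1/2)^{K-N_b}F_0+2A\lambda_0]$; one only needs to pick $\lambda_0$ small relative to $c_2$ and then $\eta,\epsilon_0$ small so that $N_b\leq K/2$ and $(1/2)^{K/2}F_0\ll 1$. The one point worth being slightly careful about is the bottom of the iteration: Corollary~\ref{deltaencor} requires $r\geq\epsilon$, while your pointwise lower bound lives at scale $c_0\epsilon$ with $c_0$ possibly less than $1$; you should stop the recurrence at the last $r_k\geq\epsilon$ and descend the final step to $c_0\epsilon$ by monotonicity alone.
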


\begin{proof} With the proof of Lemma \ref{deltaenlem} out of the way, we can now follow the arguments of \cite{BBO}, with little modification, to arrive at the desired result. We recall those arguments below for the convenience of the reader.

\hspace{6mm} To begin (cf. Lemma III.1 of \cite{BBO}), consider $\delta \in [2\epsilon^{1/4},\frac{1}{32}]$, and let 
$$m+1=\lfloor \frac{\log(\delta/\epsilon^{1/2})}{\log(4/\delta)}\rfloor.$$
Since $\delta^2\geq 4\epsilon^{1/2},$ we see that $m$ is nonnegative, and by definition,
\begin{eqnarray*}
\log(\epsilon^{-1/2})&<&(m+2)[\log 4-\log\delta]-\log\delta\\
&\leq &2(m+3)|\log\delta|
\end{eqnarray*}
(using the fact that $\delta\leq \frac{1}{4}$), so that
\begin{equation}
m+1\geq \frac{1}{3}(m+3)\geq \frac{1}{12}\frac{|\log\epsilon|}{|\log\delta|}.
\end{equation}
Observe next that
$$\bigcup_{j=0}^m((\delta/4)^{-j}\epsilon^{1/2},(\delta/4)^{-j-1}\epsilon^{1/2})\subset (\epsilon^{1/2},\delta),$$
so that, letting $F_{\epsilon}(u,p,r)$ be the monotone quantity given by (\ref{bigfedef}), we have
\begin{eqnarray*}
F_{\epsilon}(u,p,\delta)&\geq &\int_{\epsilon^{1/2}}^{\delta}\frac{d}{dr}F_{\epsilon}(u,p,r)dr\\
&\geq &\Sigma_{j=0}^m\int_{(\delta/4)^{-j}\epsilon^{1/2}}^{(\delta/4)^{-j-1}\epsilon^{1/2}}\frac{d}{dr}F_{\epsilon}(u,p,r)dr\\
&\geq &(m+1)\int_{(\delta/4)^{-j_0}\epsilon^{1/2}}^{(\delta/4)^{-j_0-1}\epsilon^{1/2}}\frac{d}{dr}F_{\epsilon}(u,p,r)dr\\
&\geq &\frac{1}{12}\frac{|\log\epsilon|}{|\log\delta|}\int_{(\delta/4)^{-j_0}\epsilon^{1/2}}^{(\delta/4)^{-j_0-1}\epsilon^{1/2}}\frac{d}{dr}F_{\epsilon}(u,p,r)dr
\end{eqnarray*}
for some $j_0\in \{0,\ldots,m\}$. Recalling the monotonicity formula (\ref{mono2}), from here it's not difficult (cf. \cite{BBO}) to see that one can find 
\begin{equation}
r_0\in [(\delta/4)^{-j_0}\epsilon^{1/2},(\delta/4)^{-j_0-1}\epsilon^{1/2}]\subset (\epsilon^{1/2},\delta)
\end{equation}
such that
\begin{equation}\label{goodradfdiffs}
F_{\epsilon}(u,p,r_0)-F_{\epsilon}(u,p,\delta r_0)\leq C(n)|\log\delta|\frac{F_{\epsilon}(u,p,\delta)}{|\log\epsilon|}
\end{equation}
and
\begin{equation}\label{goodradwest}
r_0^{2-n}\int_{B_{r_0}}\frac{W(u)}{\epsilon^2}\leq C(n)|\log\delta|\frac{F_{\epsilon}(u,p,\delta)}{|\log\epsilon|}.
\end{equation}

\hspace{6mm} Now, set 
\begin{equation}\label{etadef}
\eta:=\frac{\int_{B_1(p)}e_{\epsilon}(u)}{|\log\epsilon|}.
\end{equation}
By definition of $F_{\epsilon}$, we then have $F_{\epsilon}(u,p,1)\leq C\eta|\log\epsilon|$, and by monotonicity, it follows that
\begin{equation}
F_{\epsilon}(u,p,\delta)\leq C\eta|\log\epsilon|
\end{equation}
as well. Plugging this estimate into (\ref{goodradfdiffs}) and (\ref{goodradwest}), we obtain
\begin{equation}\label{goodrad1}
F_{\epsilon}(u,p,r_0)-F_{\epsilon}(u,p,\delta r_0)\leq C\eta|\log\delta|
\end{equation}
and
\begin{equation}\label{goodrad2}
r_0^{2-n}\int_{B_{r_0}}\frac{W(u)}{\epsilon^2}\leq C\eta|\log\delta|.
\end{equation}

\hspace{6mm} Now, since $\delta \leq\frac{1}{32}$, we can apply Corollary \ref{deltaencor} with $r=r_0$, together with (\ref{goodrad2}), to obtain the estimate
\begin{eqnarray*}
\int_{B_{\delta r_0}}e_{\epsilon}(u)&\leq &C\left(\delta^n+r_0^{2-n}\int_{B_{r_0}(p)}\frac{W(u)}{\epsilon^2}\right)\int_{B_{r_0}(p)}e_{\epsilon}(u)\\
&&+C\int_{B_{r_0}(p)}\frac{W(u)}{\epsilon^2}\\
&\leq &C(\delta^n+\eta|\log\delta|)r_0^{n-2}F_{\epsilon}(u,p,r_0)+Cr_0^{n-2}\eta|\log\delta|;
\end{eqnarray*}
or, in terms of $F_{\epsilon}$,
\begin{equation}\label{fdeltarest}
F_{\epsilon}(u,p,\delta r_0)\leq C\delta^{2-n}[(\delta^n+\eta|\log\delta |)F_{\epsilon}(u,p,r_0))+\eta|\log\delta|].
\end{equation}
Combining (\ref{goodrad1}) with (\ref{fdeltarest}), we arrive at the estimate
\begin{equation}\label{goodrad3}
(1-C\eta\delta^{2-n}|\log\delta|-C\delta^2)F_{\epsilon}(u,p,r_0)\leq C\eta|\log\delta|.
\end{equation}

\hspace{6mm} Provided $\eta \leq \frac{1}{(32)^{2(n-2)}},$ we can now choose $\delta=\eta^{\frac{1}{2(n-2)}},$ so that (\ref{goodrad3}) becomes
\begin{equation}
(1-C\eta^{1/2}|\log\eta|-C\eta^{\frac{1}{n-2}})F_{\epsilon}(u,p,r_0)\leq C\eta|\log\eta|,
\end{equation}
and provided $\eta$ is sufficiently small relative to $C(M)$, it follows that
\begin{equation}\label{finalfrest}
F_{\epsilon}(u,p,r_0)\leq \eta^{1/2}.
\end{equation}
By the monotonicity of $F_{\epsilon}(u,p,r)$, since $r_0\geq \epsilon$, we conclude that
\begin{equation}\label{epballavg}
\frac{1}{\epsilon^n}\int_{B_{\epsilon}(p)}W(u)\leq F_{\epsilon}(u,p,\epsilon)\leq \eta^{1/2}
\end{equation}
as well. On the other hand, if $|u|(p)\leq \frac{1}{2}$, then it follows from the gradient estimate
$$|du|\leq \frac{C}{\epsilon}$$
that $|u(x)|\leq \frac{3}{4}$ for all $x\in B_{\epsilon/4C}(p)$, and therefore
$$\frac{1}{\epsilon^n}\int_{B_{\epsilon}(p)}W(u)\geq \frac{1}{\epsilon^n}\int_{B_{\epsilon/4C}(p)}W(3/4)\geq c(M)>0.$$
Thus, by (\ref{epballavg}), we see that if $|u|(p)\leq \frac{1}{2}$ and $\eta$ is sufficiently small relative to $C(M)$, then 
$$\eta^{1/2}\geq c(M)>0.$$
Recalling that $\eta:=\frac{\int_{B_1(p)}e_{\epsilon}(u)}{|\log\epsilon|}$, the proposition follows.
\end{proof}


\begin{thebibliography}{199}

\bibitem{A} F. Almgren, \emph{The theory of varifolds,} Mimeographed notes, Princeton (1965).

\bibitem{All} W.K. Allard, \emph{On the first variation of a varifold,} Ann. of Math. 95 (1972), 417-491.

\bibitem{Alme1} L. Almeida, \emph{Topological sectors for Ginzburg-Landau energies}, Rev. Mat. Iberoamericana 15 (1999), 487-545.

\bibitem{Alme2} L. Almeida, \emph{Threshold transition energies for Ginzburg-Landau functionals}, Nonlinearity 12 (1999), 1389-1414.

\bibitem{AlmeBeth} L. Almeida and F. Bethuel, \emph{Topological methods for the Ginzburg-Landau equation}, J. Math. Pures Appl. 77 (1998), 1-49.

\bibitem{AS} L. Ambrosio and H. Soner, \emph{A measure theoretic approach to higher codimension mean curvature flow,} Ann. Sc. Norm. Sup. Pisa, Cl. Sci. (4) 25 (1997), 27–49. 

\bibitem{Aub} T. Aubin, \emph{Some nonlinear problems in Riemannian geometry,} Springer Monographs in Mathematics, Springer-Verlag, Berlin, 1998.

\bibitem{BBO} F. Bethuel, H. Brezis, and G. Orlandi, \emph{Asymptotics for the Ginzburg-Landau equation in arbitrary dimensions}, J. Funct. Anal. 186 (2001), 432-520.

\bibitem{BOSapp} F. Bethuel, G. Orlandi, and D. Smets, \emph{Approximations with vorticity bounds for the Ginzburg-Landau functional}, Commun. Contemp. Math. 6 (2004), 803–832.

\bibitem{BOSe} F. Bethuel, G. Orlandi, and D. Smets, \emph{Improved estimates for the Ginzburg-Landau equation: the elliptic case}, Ann. Scuola Norm. Sup. Pisa Cl. Sci. (5) Vol. IV (2005), 319-355

\bibitem{BOSp} F. Bethuel, G. Orlandi, and D. Smets, \emph{Convergence of the parabolic Ginzburg-Landau equation to motion by mean curvature}, Annals of Mathematics, Vol. 163, No. 1 (2006), 37-163. 

\bibitem{Gu} M. A. Guaraco, \emph{Min-max for phase transitions and the existence of embedded minimal hypersurfaces}, arXiv preprint arXiv:1505.06698, (2015).

\bibitem{JMZ} S. Jimbo, Y. Morita, and J. Zhai, \emph{Ginzburg-Landau equation and stable steady state solutions in a non-trivial domain}, Comm. Partial Differential Equations, 20 (1995), no. 11-12, 2093–2112.

\bibitem{JSo} R. L. Jerrard and H. M. Soner, \emph{The Jacobian and the Ginzburg-Landau energy,} Calc. Var. Partial Differential Equations 14 (2002) 151-191.

\bibitem{LR} F.-H. Lin and T. Rivi\`{e}re, \emph{Complex Ginzburg-Landau equations in high dimensions and codimension two area minimizing currents}, J. Eur. Math. Soc. 1 (1999), 237-311. 

\bibitem{P}  J. T. Pitts, \emph{Existence and regularity of minimal surfaces on Riemannian manifolds}, no. 27 in Mathematical Notes, Princeton University Press, Princeton, 1981. 

\bibitem{Scott} C. Scott, \emph{$L^p$ theory of differential forms on manifolds}, Transactions of the American Mathematical Society, Vol. 347, No. 6 (Jun., 1995), pp. 2075-2096 

\bibitem{Si} L. Simon, \emph{Lectures on geometric measure theory,} Proc. C.M.A. 3, Australian Nat. U. (1983) 

\bibitem{Stern} D. Stern, \emph{A Natural Min-Max Construction for Ginzburg-Landau Functionals}, arXiv preprint arXiv:1612.00544, (2016).


\end{thebibliography}
\end{document}